\documentclass[12pt]{amsart}

\usepackage{enumitem}
\usepackage{stmaryrd}
\usepackage{url}
\usepackage[dvipsnames]{xcolor}
\usepackage{tikz-cd}
\usepackage{graphicx}
\usepackage{amsmath}
\usepackage{color}
\usepackage{caption}
\usepackage{subcaption}
\usepackage{amsfonts,amssymb,amscd}
\usepackage{mathrsfs}
\usepackage{bm}
\usepackage{verbatim} % needed for comment enviroment
\usepackage{manfnt}
\usepackage{yhmath}

\usepackage[toc,page]{appendix}
\calclayout

\newtheoremstyle{remboldstyle}
  {}{}{\itshape}{}{\bfseries}{.}{.5em}{{\thmname{#1 }}{\thmnumber{#2}}{\thmnote{ (#3)}}}
\theoremstyle{remboldstyle}

\usepackage[outdir=./]{epstopdf}

\newtheorem{thm}{Theorem}[section]
\newtheorem{prop}[thm]{Proposition}
\newtheorem{lem}[thm]{Lemma}
\newtheorem{cor}[thm]{Corollary}

\newtheorem{thmx}{Theorem}
\theoremstyle{definition}
\newtheorem{definition}[thm]{Definition}

\newtheorem{rem}[thm]{Remark}
\newtheorem{notation}[thm]{Notation}

\usepackage[greek,english]{babel}
\usepackage[LGR,T1]{fontenc}
\usepackage[utf8]{inputenc}

\makeatletter
\DeclareFontFamily{U}{tipa}{}
\DeclareFontShape{U}{tipa}{m}{n}{<->tipa10}{}
\newcommand{\arc@char}{{\usefont{U}{tipa}{m}{n}\symbol{62}}}%

\newcommand{\arc}[1]{\mathpalette\arc@arc{#1}}

\newcommand{\arc@arc}[2]{%
  \sbox0{$\m@th#1#2$}%
  \vbox{
    \hbox{\resizebox{\wd0}{\height}{\arc@char}}
    \nointerlineskip
    \box0
  }%
}
\makeatother

\numberwithin{equation}{section}

\catcode`\@=11
\newdimen\cdsep
\def\cdstrut{\vrule height .6\cdsep width 0pt depth .4\cdsep}
\def\@cdstrut{{\advance\cdsep by 2em\cdstrut}}

\def\arrow#1#2{
  \ifx d#1
    \llap{$\scriptstyle#2$}\left\downarrow\cdstrut\right.\@cdstrut\fi
  \ifx u#1
    \llap{$\scriptstyle#2$}\left\uparrow\cdstrut\right.\@cdstrut\fi
  \ifx r#1
    \mathop{\hbox to \cdsep{\rightarrowfill}}\limits^{#2}\fi
  \ifx l#1
    \mathop{\hbox to \cdsep{\leftarrowfill}}\limits^{#2}\fi
}
\catcode`\@=12

\newcommand{\Chat}{\widehat{\mathbb{C}}}

\makeatletter
\@namedef{subjclassname@2020}{\textup{2020} Mathematics Subject Classification}
\makeatother

\begin{document}

%%
%% The title of the paper goes here.  Edit to your title.
%%

\title[Blaschke Products Sharing a Welding Homeomorphism]{Blaschke Products Sharing a Welding Homeomorphism}

%%
%% Now edit the following to give your name and address:
%% 
\author{Kirill Lazebnik}
\thanks{\noindent The author was supported by NSF grant DMS-2452130.} 

% The changes the MR classification year from 1991 ro 2020
% This is not necessary if the latest version of TeX is installed
%\makeatletter
%\@namedef{subjclassname@2020}{\textup{2020} Mathematics Subject Classification}
%\makeatother

%%
%% If there is another author uncomment and edit the following.
%%

%\author{Second Author}
%\address{Department of Mathematics, University of South Carolina,
%Columbia, SC 29208}
%\email{second@math.sc.edu}
%\urladdr{www.math.sc.edu/$\sim$second}

%%
%% If there are three of more authors they are added in the obvious
%% way. 
%%

%%%
%%% The following is for the abstract.  The abstract is optional and
%%% if not used just delete, or comment out, the following.
%%%

\begin{abstract} Given two Blaschke products $A$, $B$ of the same degree, a \emph{welding homeomorphism} of $A$, $B$ is a circle homeomorphism $k: \mathbb{T} \rightarrow \mathbb{T}$ satisfying $A\circ k=B$ on $\mathbb{T}$. We show that any two pairs of Blaschke products sharing a welding homeomorphism must factor through a common pair of Blaschke products, and we give a criterion for detecting whether two Blaschke products admit such a factorization. %The proof is a short application of the uniformization theorem and L\"uroth's theorem.
\end{abstract}

%%
%%  LaTeX will not make the title for the paper unless told to do so.
%%  This is done by uncommenting the following.
%%
%\date{\today}

\maketitle

%%
%% LaTeX can automatically make a table of contents.  This is done by
%% uncommenting the following:
%%
%\tableofcontents

\section{Introduction}

The equation in the following Definition \ref{weldingeqtncurves} has recently been featured in several studies on applications of Complex Analysis to Pattern Recognition. We denote by $\Chat:=\mathbb{C}\cup\{\infty\}$ the Riemann sphere, $\mathbb{D}:=\{z\in\Chat: |z|<1\}$, $\mathbb{D}^*:=\{z\in\Chat: |z|>1\}$, $\mathbb{T}:=\{z \in\Chat : |z|=1\}$, and we recall that conformal maps between Jordan domains extend to homeomorphisms between their boundaries, by Carath\'eodory's Theorem. 
%Let us describe the question in the recent literature on applications of Complex Analysis to Pattern Recognition which we claim Theorem \ref{soly_ans2} answers. 

\begin{definition}\label{weldingeqtncurves} The \emph{Welding Equation for Jordan Curves} is the functional equation 
\begin{equation}\label{exieqtndefncurves} \phi^{\gamma}\circ k = \psi^{\gamma}
\end{equation}
where $k: \mathbb{T}\rightarrow\mathbb{T}$ is a given homeomorphism, and a solution is an oriented Jordan curve $\gamma\subset\Chat$ so that (\ref{exieqtndefncurves}) holds on $\mathbb{T}$ where $\psi^{\gamma}$ (resp. $\phi^{\gamma}$) is a conformal map of $\mathbb{D}$ (resp. $\mathbb{D}^*$) onto the component of $\Chat\setminus\gamma$ oriented positively (resp. negatively) with respect to $\gamma$.%, and $k:\mathbb{T}\rightarrow\mathbb{T}$ is a homeomorphism.
\end{definition}

\begin{rem} Circle homeomorphisms $k$ for which there exists a solution to $\phi^{\gamma}\circ k = \psi^{\gamma}$ are called \emph{conformal weldings} in the Complex Analysis literature, and smooth conformal weldings are called \emph{fingerprints} in the recent literature on Pattern Recognition.

% \emph{fingerprints} in the recent literature on Pattern Recognition, or \emph{conformal weldings} in the Complex Analysis literature. %The homeomorphism $k$ is generally given and a solution to the welding equation is a Jordan curve $\gamma$ so that $\phi^{\gamma}\circ k = \psi^{\gamma}$ holds on $\mathbb{T}$. Circle homeomorphisms $k$ for which there exists a solution to $\phi^{\gamma}\circ k = \psi^{\gamma}$ are called \emph{fingerprints} in the recent literature on Pattern Recognition, or \emph{welding homeomorphisms} in the literature on conformal welding. 
\end{rem} 

%Given a Jordan curve $\gamma$ in the plane or sphere, one may associate an essentially unique circle homeomorphism to $\gamma$ called the \emph{fingerprint} of $\gamma$

%\begin{definition}\label{fingerprint_defn} Let $\gamma\subset\Chat$ be a smooth Jordan curve, and denote by $\phi_{\textrm{in}}$ (resp. $\phi_{\textrm{out}}$) the conformal map of $\mathbb{D}:=\{z : |z|<1\}$ (resp. $\mathbb{D}^*:=\{z: |z|>1\}$) onto one component of $\Chat\setminus\gamma$ (resp. the other component). The homeomorphism $k_\gamma:=\phi_{\textrm{out}}^{-1}\circ\phi_{\textrm{in}}: \mathbb{T}\rightarrow\mathbb{T}$ is termed the \emph{fingerprint} of $\gamma$.
%\end{definition}

Fingerprints lend a particularly useful metric for studying the space of smooth curves (via a choice of metric on the space of smooth homeomorphisms $\mathbb{T}\mapsto\mathbb{T}$), a point of view initiated in \cite{MR902292}, \cite{Sharon-Mumford} and continued in \cite{MR2858965}, \cite{MR3784168}, \cite{solynin2020fingerprintslemniscatesquadraticdifferentials} among others. %To explain our point of view, let us introduce a bit of notation.

The following equation was introduced in \cite{MR2868587}; its relevance to the study of fingerprints is explained in Theorems \ref{younsi_result}, \ref{khav_result} below.

\begin{definition}\label{welding_defn_bp} The \emph{Welding Equation for Blaschke Products} is the functional equation 
\begin{equation}\label{exieqtndefn} A\circ k = B
\end{equation}
where $k:\mathbb{T}\rightarrow\mathbb{T}$ is a given homeomorphism and a solution is a pair of Blaschke products $A$, $B$ of the same degree so that (\ref{exieqtndefn}) holds on $\mathbb{T}$.
\end{definition}

%\begin{rem} As for the Welding Equation for Jordan curves, the circle homeomorphism $k$ is given and a solution is a pair of Blaschke products $A$, $B$ satisfying $A\circ k = B$ on $\mathbb{T}$.
%\end{rem}

We recall a Blaschke product is a rational map $r(z)=p(z)/q(z)$ of degree $\geq1$ which, when restricted to $\mathbb{D}$, is a proper map onto $\mathbb{D}$. For rational $r$, we denote the \emph{lemniscate} of $r$ by $L_r:=r^{-1}(\mathbb{T})$. 

%\noindent The terms rational mapping and rational function are used interchangeably.

%\begin{rem} One may equivalently define $\textrm{Rat}_d$ as the space of holomorphic self-mappings of the Riemann Sphere $\Chat$ having the property that each point $w\in\Chat$ has $d$-many preimages, counting multiplicity.
%\end{rem}

%\begin{thm}\label{khav_result} \cite{MR2868587} Let $p$ be a polynomial of degree $d$, assume $L_p$ is a Jordan curve, and denote by $k_p$ the fingerprint of $L_p$. Then there exists a solution $A$, $B$ to $A\circ k_p =B$ with $A(z)=z^d$, and moreover this solution is essentially unique in that any other solution $A$, $\tilde{B}$ with $A(z)=z^d$ satisfies $\tilde{B}=e^{i2k\pi/d}B$ for some $0\leq k<d$. 
%\end{thm}

\begin{thm}\label{younsi_result} \cite{MR3447662} There exists a solution to $A\circ k=B$ if and only if $k$ is a fingerprint of a Jordan curve rational lemniscate. 
\end{thm}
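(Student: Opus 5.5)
Both directions come from one dictionary: a solution $A\circ k=B$ corresponds to a rational map $r$ whose lemniscate $L_r$ is a Jordan curve, with conformal maps $\psi$ of $\mathbb{D}$ onto the bounded side and $\phi$ of $\mathbb{D}^*$ onto the other side satisfying $r\circ\psi = B$ on $\mathbb{D}$ and $r\circ\phi = A^*$ on $\mathbb{D}^*$. Here $A^*(z):=1/\overline{A(1/\bar z)}$ is the reflected Blaschke product, which maps $\mathbb{D}^*$ properly onto $\mathbb{D}^*$. Since $A^*=A$ on $\mathbb{T}$, the relation $k=\phi^{-1}\circ\psi$ on $\mathbb{T}$ is then exactly equivalent to $A\circ k=B$. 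Throughout, "fingerprint" means the welding homeomorphism of the curve, and I would note that normalizations (Möbius post-composition on the curve side, rotations on the Blaschke side) only change $k$ by pre/post-composition with rotations. I would fix one convention so that $k$ matches exactly.

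\emph{($\Leftarrow$)} Let $r$ be rational of degree $d$ with $L_r$ a Jordan curve $\gamma$, and let $\Omega$, $\Omega^*$ be the complementary components. Then $r$ maps each of $\Omega$, $\Omega^*$ properly onto a component of $\Chat\setminus\mathbb{T}$. After possibly replacing $r$ by $1/r$, we may take $r(\Omega)=\mathbb{D}$ and $r(\Omega^*)=\mathbb{D}^*$, each of degree $d$, because every fibre of a point of $\mathbb{D}$ lies in $\Omega$. Take $\psi:\mathbb{D}\to\Omega$ and $\phi:\mathbb{D}^*\to\Omega^*$ conformal. Then $B:=r\circ\psi$ is a proper holomorphic self-map of $\mathbb{D}$ of degree $d$, hence a Blaschke product. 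Likewise $r\circ\phi$ is a proper self-map of $\mathbb{D}^*$, so $A:=(r\circ\phi)^*$ is a Blaschke product. By Carathéodory the maps extend continuously to $\mathbb{T}$, and there $B=r\circ\psi=r\circ\phi\circ(\phi^{-1}\circ\psi)=A\circ k$ with $k=\phi^{-1}\circ\psi$.

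\emph{($\Rightarrow$)} Given Blaschke products $A,B$ of degree $d$ and a homeomorphism $k$ with $A\circ k=B$ on $\mathbb{T}$, I would build a sphere by gluing. Consider the function
\[
F = B \text{ on } \overline{\mathbb{D}}, \qquad F = A^*\circ\theta \text{ on } \overline{\mathbb{D}^*},
\]
where $\theta$ is a homeomorphism of $\overline{\mathbb{D}^*}$ extending $k$, for instance the Beurling--Ahlfors extension. Then $F$ is a continuous branched covering $\Chat\to\Chat$. It is holomorphic on $\mathbb{D}$, but on $\mathbb{D}^*$ it is merely quasiregular-type, or only topological if $k$ is not quasisymmetric.

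The cleaner route avoids regularity entirely by welding abstractly. Glue $\overline{\mathbb{D}}$ and $\overline{\mathbb{D}^*}$ along $\mathbb{T}$ via $k$, and define the complex structure using the covering itself. On $\overline{\mathbb{D}}\sqcup\overline{\mathbb{D}^*}/(z\sim k(z))$, the map $G$ given by $B$ and $A^*$ on the two pieces is well defined, because $B(z)=A(k(z))=A^*(k(z))$. Near any point of the seam not mapping to a critical value, $G$ is a local homeomorphism onto a neighbourhood in $\Chat$, so we pull back charts of $\Chat$ through $G$. Near critical points on the seam we use local $d$-th-root charts; note that Blaschke products have no critical points on $\mathbb{T}$, so in fact $G$ is a local homeomorphism along the whole seam. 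This makes $X$ a compact simply connected Riemann surface, so by uniformization there is a conformal $h:X\to\Chat$. Holomorphy of $G$ is automatic because the charts were pulled back by $G$. Then $r:=G\circ h^{-1}$ is rational of degree $d$, and $r^{-1}(\mathbb{T})=h(\text{seam})$ is a Jordan curve. With $\psi=h|_{\mathbb{D}}$ and $\phi=h|_{\mathbb{D}^*}$, which are conformal onto the two components, we get $\phi^{-1}\circ\psi=k$ on $\mathbb{T}$. So $k$ is the fingerprint of $L_r$.

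The main obstacle is verifying that the pulled-back atlas along the seam is compatible with the given conformal structures on the open disks. On the disk sides, $G$ restricted to a small half-neighbourhood is the holomorphic local diffeomorphism $B$ or $A^*$, so the transition maps are compositions of local inverses of holomorphic maps. Conformal welding is usually hard, but here it is trivial, because the two sides are glued compatibly with the covering $G$, whose local injectivity near $\mathbb{T}$ follows from the fact that $B'\neq0$ on $\mathbb{T}$.
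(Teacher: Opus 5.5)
Your proposal is correct and follows the paper's route. The nontrivial direction is the proof of Theorem \ref{maliksthm}: glue $\overline{\mathbb{D}}$ and $\overline{\mathbb{D}^*}$ along $k$, uniformize, and push the piecewise map down to a rational $r$. Your pullback of charts through $G$, justified by $A',B'\neq 0$ on $\mathbb{T}$, makes explicit the paper's ``one readily defines charts''. Your converse via Riemann maps is the routine direction that the paper leaves to the citation.

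Two small remarks. Your $A^*$ coincides with $A$ identically, since Blaschke products commute with $z\mapsto 1/\bar z$. Also, $k$ is automatically real-analytic, being locally $A^{-1}\circ B$ on $\mathbb{T}$, so the Beltrami route you set aside does work; it is the one sketched in the paper's remark after Theorem \ref{maliksthm}.
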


The choice to study fingerprints of lemniscates is explained by Hilbert's Lemniscate Theorem: Hilbert's theorem asserts that Jordan curve lemniscates are dense (in a suitable sense) among all planar curves; we refer to \cite{MR4880202} for further discussion. This connection was observed in \cite{MR2868587} where Theorem \ref{younsi_result} was conjectured, and the following polynomial version of Theorem \ref{younsi_result} was proven with $A$ fixed to be $A(z)=z^d$. 

\begin{thm}\label{khav_result} \cite{MR2868587} There exists a solution to $A\circ k =B$ with $A(z)=z^d$ if and only if $k$ is a fingerprint of a Jordan curve polynomial lemniscate, in which case a solution $z^d$, $B$ is unique up to multiplication of $B$ by $e^{i2k\pi/d}$ for $0\leq k<d$. 
\end{thm}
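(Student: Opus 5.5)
We prove the two directions separately and then handle uniqueness. For the forward direction, suppose $z^d\circ k = B$ on $\mathbb{T}$ with $B$ a Blaschke product of degree $d$. The idea is to build a polynomial $p$ whose lemniscate $L_p$ is a Jordan curve with fingerprint $k$, by welding two pieces together. On the inside we use $B$ itself. On the outside we use $z^d$ restricted to $\mathbb{D}^*$, which is a degree $d$ branched cover of $\mathbb{D}^*$ branched only at $\infty$.

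Define a topological sphere $S$ by gluing $\overline{\mathbb{D}}$ to $\overline{\mathbb{D}^*}$ along $\mathbb{T}$ via $k$, identifying $z\in\mathbb{T}\subset\overline{\mathbb{D}}$ with $k(z)\in\mathbb{T}\subset\overline{\mathbb{D}^*}$. Define $F$ on $S$ by $F=B$ on $\overline{\mathbb{D}}$ and $F=z^d$ on $\overline{\mathbb{D}^*}$. This is well defined because $B(z)=k(z)^d$ on $\mathbb{T}$. The map $F$ is a degree $d$ branched cover $S\to\Chat$, holomorphic off the seam.

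Pull back the complex structure of $\Chat$ by $F$; near the seam $F$ is a local homeomorphism, since $B'\neq0$ on $\mathbb{T}$ for a finite Blaschke product. This makes $S$ a Riemann surface on which $F$ is holomorphic, and on each hemisphere the pulled-back structure agrees with the standard one. By uniformization there is a conformal $h:S\to\Chat$. Set $\psi=h|_{\mathbb{D}}$, $\phi=h|_{\mathbb{D}^*}$, normalized so that $\phi(\infty)=\infty$, and set $\gamma=h(\mathbb{T})$. Then $p:=F\circ h^{-1}$ is rational of degree $d$ with $p^{-1}(\infty)=\{\infty\}$, so $p$ is a polynomial. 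Moreover $p^{-1}(\mathbb{T})=\gamma$ is a Jordan curve, and $\phi\circ k=\psi$ on $\mathbb{T}$ by construction, so $k$ is the fingerprint of $L_p$.

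For the converse, let $p$ be a polynomial of degree $d$ with $L_p$ a Jordan curve, and let $\psi,\phi$ be the Riemann maps onto the interior and exterior of $L_p$, with $\phi(\infty)=\infty$. Then $p\circ\phi$ is a proper degree $d$ self-map of $\mathbb{D}^*$ whose only pole is at $\infty$ and which has no other preimages of $\infty$. By reflection it is a Blaschke product of degree $d$ with all poles at $\infty$ and all zeros at $0$, so it equals $\lambda z^d$ with $|\lambda|=1$. After rotating $\phi$ we may take $p\circ\phi=z^d$. Likewise $B:=p\circ\psi$ is a Blaschke product of degree $d$. Then on $\mathbb{T}$,
\[
(\phi^{-1}\circ\psi)^d = p\circ\psi = B,
\]
so $z^d\circ k = B$ with $k=\phi^{-1}\circ\psi$.

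For uniqueness, suppose $z^d\circ k=B$ and $z^d\circ k=\tilde B$ on $\mathbb{T}$. Then $B=\tilde B$ on $\mathbb{T}$, so $B=\tilde B$ everywhere, since rational maps agreeing on a circle agree on $\Chat$. This is stronger than the stated claim; the rotations $e^{i2k\pi/d}$ arise from the ambiguity in the normalization of $k$ when the fingerprint is determined only up to the rotation of $\phi$ fixing $\infty$.

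The main obstacle is justifying that the welded surface carries a conformal structure, and keeping track of the normalizations. The first is handled cleanly by the pullback construction, which needs $F$ to be a local homeomorphism along the seam; that is exactly the fact that $B'$ does not vanish on $\mathbb{T}$.
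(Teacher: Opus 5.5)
The paper does not prove this statement; it quotes it from \cite{MR2868587}. Your proposal is essentially correct, and your existence direction is exactly the welding-plus-uniformization argument the paper gives for Theorem \ref{maliksthm}. You specialize it to $A(z)=z^d$ on the exterior piece, so that $\infty$ is the only pole and the welded map is a polynomial. Your pullback of the complex structure by $F$, using $B'\neq 0$ on $\mathbb{T}$, makes precise the paper's ``one readily defines charts''. Your observation that $B|_{\mathbb{T}}=k^d$ determines $B$ settles the uniqueness claim as literally stated.

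There is one small slip in the converse. You must work with the maps $\phi,\psi$ that come with the given fingerprint $k$. Rotating $\phi$ replaces $k$ by $e^{-i\theta}k$, so as written you produce a solution only for a rotated homeomorphism. The fix is to keep $\phi$, write $p\circ\phi=\lambda z^d$, and replace $p$ by $\bar\lambda p$, which has the same lemniscate; equivalently, set $B:=\bar\lambda\, p\circ\psi$.

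The normalization $\phi(\infty)=\infty$ must be part of the hypothesis, as in the definition of fingerprint in \cite{MR2868587}; you should not present it as a choice. With the unnormalized maps allowed in Definition \ref{weldingeqtncurves}, the converse fails for non-circular lemniscates. Indeed, Theorem \ref{uniquenessconcmalik} would force the common factor $A_1$ to be fully ramified at two distinct points, hence of degree one, hence the curve would be a circle.

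Finally, your closing heuristic is off. Rotating $\phi$ by $e^{i\theta}$ multiplies $B$ by $e^{-id\theta}$, which is an arbitrary unimodular constant. The $d$-th roots of unity in the cited result instead index the $d$ homeomorphisms $k$ with $k^d=B$ for a fixed $B$.
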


% Let $k$ be the fingerprint of a Jordan curve polynomial lemniscate $L_p$. Then there exists a solution $A$, $B$ to $A\circ k =B$ with $A(z)=z^d$ where $d:=\deg(p)$, and any other solution $A$, $\tilde{B}$ with $A(z)=z^{d}$ satisfies $\tilde{B}=e^{i2k\pi/d}B$ for some $0\leq k<d$.
%\end{thm}

%\begin{thm}\label{khav_result} For each degree $d$ Jordan curve polynomial lemniscate, there exists a unique Blaschke product $B$ satisfying (\ref{exieqtndefn}) with $k$ the fingerprint of the lemniscate and $A(z):=z^d$. 
%\end{thm} 

%It was proven in \cite{MR2868587} that the fingerprints of (Jordan curve) polynomial lemniscates $p^{-1}(\mathbb{T})$ coincide with %solutions to the functional equation
%\begin{equation}\label{khavfunceqtn} (z\mapsto z^n) \circ k = B \end{equation}

%Moreover, it is proven in \cite{MR2868587} that $k\mapsto B$ is also onto, although ... solutions $k$ to (\ref{exieqtndefn}) with $A(z):=z^d$ and $B$ a Blaschke product coincide with fingerprints of Jordan curve polynomial lemniscates.

% For variable $A$, one has the following. %Thus the coefficients of $B$ provide coordinates on (a dense subspace) of the collection of planar curves. 

We supply a uniqueness conclusion to Theorem \ref{younsi_result} which, as we will see, differs from the corresponding uniqueness conclusion of Theorem \ref{khav_result}. Indeed, it is evident that if $A$, $B$ solves $A\circ k=B$ then so does $C\circ A$, $C\circ B$ for any Blaschke product $C$. We prove that this is the only non-uniqueness of solutions to $A\circ k=B$.

\begin{thmx}\label{uniquenessconcmalik} Two pairs $A$, $B$ and $A^*$, $B^*$ of Blaschke products share the same welding homeomorphism if and only if there exist Blaschke products $A_1$, $B_1$ and $C$, $C^*$ so that
\begin{equation}\label{matrix_version2} \begin{bmatrix} A & B \\ A^* & B^* \end{bmatrix} = \begin{bmatrix}  C \circ A_1 \hspace{5mm}  C \circ B_1 \\
 C^* \circ A_1 \hspace{4mm} C^* \circ B_1 \end{bmatrix}
\end{equation} % $A$, $A^*$ both factor through $A_1$ and $B$, $B^*$ both factor through $B_1$. 
\end{thmx}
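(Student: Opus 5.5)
The ``if'' direction is immediate once it is read with the implicit requirement that the inner pair $A_1$, $B_1$ is itself welded by a homeomorphism $k$, i.e.\ $A_1\circ k=B_1$ on $\mathbb{T}$. Then $A\circ k=C\circ A_1\circ k=C\circ B_1=B$, and likewise $A^*\circ k=B^*$. So all the work is in the ``only if'' direction. Fix $k$ with $A\circ k=B$ and $A^*\circ k=B^*$. My plan is to transport both pairs to a single Jordan curve, turn them into two rational maps on $\Chat$, use L\"uroth's theorem to factor both maps through a common one, and then pull the factorization back to Blaschke products.

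\emph{Step 1 (one curve, two rational maps).} By Theorem \ref{younsi_result}, $k$ is the fingerprint of a Jordan curve $\gamma$ which is a rational lemniscate. Let $\psi:\mathbb{D}\to\Omega$ and $\phi:\mathbb{D}^*\to\Omega^*$ be the conformal maps with $\phi\circ k=\psi$ on $\mathbb{T}$. Define
\[
R:=B\circ\psi^{-1}\ \text{on } \overline{\Omega}, \qquad R:=A\circ\phi^{-1}\ \text{on } \Omega^* .
\]
These two formulas agree on $\gamma$ because $A\circ\phi^{-1}=A\circ k\circ\psi^{-1}=B\circ\psi^{-1}$ there. Here $A$ is regarded on $\mathbb{D}^*$ via its reflection-symmetric extension, which maps $\mathbb{D}^*$ properly onto $\mathbb{D}^*$. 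Thus $R$ is continuous on $\Chat$ and holomorphic off $\gamma$. Since $\gamma$ is a lemniscate, it is a finite union of analytic arcs and hence conformally removable. Therefore $R$ is rational, with $R^{-1}(\mathbb{T})=\gamma$ and $R^{-1}(\mathbb{D})=\Omega$. Removability also makes $\gamma$ unique up to M\"obius maps, so I may use the same $\gamma$, $\phi$, $\psi$ for the second pair and obtain a rational $R^*$ from $A^*$, $B^*$ in the same way.

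\emph{Step 2 (L\"uroth).} The field $\mathbb{C}(R,R^*)\subset\mathbb{C}(z)$ is, by L\"uroth's theorem, equal to $\mathbb{C}(H)$ for some rational $H$. Hence $R=C\circ H$ and $R^*=C^*\circ H$ for rational $C$, $C^*$.

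\emph{Step 3 (recovering Blaschke products).} I expect this to be the main obstacle. We have $H^{-1}(C^{-1}(\mathbb{T}))=R^{-1}(\mathbb{T})=\gamma$, which is connected, and $H$ is surjective. From this I want to show that $\sigma:=C^{-1}(\mathbb{T})$ equals $H(\gamma)$ and is a Jordan curve, and likewise that $C^{*-1}(\mathbb{T})=\sigma$. Moreover $H$ should map $\Omega$ properly onto the bounded side $U$ of $\sigma$, and $\Omega^*$ properly onto the other side $U^*$. Then $C$ and $C^*$ are proper from $U$ onto $\mathbb{D}$.

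Let $\alpha:\mathbb{D}\to U$ and $\beta:\mathbb{D}^*\to U^*$ be conformal, and set
\[
B_1:=\alpha^{-1}\circ H\circ\psi, \qquad A_1:=\beta^{-1}\circ H\circ\phi .
\]
These are proper self-maps of the disk, respectively of $\mathbb{D}^*$ (with $A_1$ then reflected to $\mathbb{D}$), so they are Blaschke products. Likewise $C\circ\alpha$ and $C\circ\beta$ are Blaschke products on the appropriate disks.

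It remains to make the outer factor a single Blaschke product $\widetilde C$ with $C\circ\alpha=\widetilde C$ and $C\circ\beta=\widetilde C$. For this I need $\alpha$ and $\beta$ to glue along $\mathbb{T}$ via the welding of $\sigma$, i.e.\ I must absorb the welding homeomorphism of $\sigma$ into $A_1$, $B_1$. I would handle this by applying Step 1 in reverse to $\sigma$, which is a Jordan lemniscate of $C$. That produces a homeomorphism $k_1$ with $A_1\circ k_1=B_1$ and an outer factor $\widetilde C$ with $A=\widetilde C\circ A_1$ and $B=\widetilde C\circ B_1$, and similarly $\widetilde C^*$ for the second pair. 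The same $A_1$, $B_1$ serve both pairs because they depend only on $H$ and $\sigma$. Verifying the Jordan property of $\sigma$ and the properness claims above is where I would concentrate the care.
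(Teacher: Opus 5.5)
There is a genuine gap in the last step of your argument, and it is the heart of the matter rather than a technicality. Steps 1 and 2 are fine; Step 1 is the paper's Theorem \ref{maliksthm}, obtained from Theorem \ref{younsi_result} plus removability instead of uniformization. The topological claims at the start of Step 3 are also true. Since $H^{-1}(C^{-1}(\mathbb{D}))=\Omega$ and $H$ is onto, $C^{-1}(\mathbb{D})=H(\Omega)$ is connected, and likewise $C^{-1}(\mathbb{D}^*)=H(\Omega^*)$. Riemann--Hurwitz then forces both to be simply connected, with no critical points of $C$ on $\sigma$.

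The failure is this. With your $A_1=\beta^{-1}\circ H\circ\phi$ and $B_1=\alpha^{-1}\circ H\circ\psi$ you have $A=(C\circ\beta)\circ A_1$ and $B=(C\circ\alpha)\circ B_1$. Since $A_1$ and $B_1$ are onto, any $\widetilde C$ with $A=\widetilde C\circ A_1$ and $B=\widetilde C\circ B_1$ must equal $C\circ\beta$ on $\mathbb{D}^*$ and $C\circ\alpha$ on $\mathbb{D}$. These two Blaschke products satisfy $(C\circ\beta)\circ k_\sigma=C\circ\alpha$ on $\mathbb{T}$, where $k_\sigma=\beta^{-1}\circ\alpha$ is the welding of $\sigma$. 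Nothing you have proved prevents $k_\sigma$ from being non-M\"obius, i.e.\ $\sigma$ from being a non-round curve, and then no such $\widetilde C$ exists.

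Your proposed repairs do not get around this. Absorbing $k_\sigma$ into $A_1$ or $B_1$ destroys holomorphy. The homeomorphism $k_1$ with $A_1\circ k_1=B_1$ exists for any two Blaschke products of equal degree (lift $B_1|_{\mathbb{T}}$ through the covering $A_1|_{\mathbb{T}}$), so it carries no information. Re-running Steps 1--3 on the pairs $(C\circ\beta,C\circ\alpha)$ and $(C^*\circ\beta,C^*\circ\alpha)$ with homeomorphism $k_\sigma$ just returns $C$, $C^*$ on $\sigma$. It is circular, because $\mathbb{C}(C,C^*)=\mathbb{C}(z)$ and the new L\"uroth generator is M\"obius.

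What you need is that $\sigma$ is a round circle. Then replace $H$ by $M\circ H$ with $M$ M\"obius, $M(\sigma)=\mathbb{T}$ and $M(C^{-1}(\mathbb{D}))=\mathbb{D}$. This makes $C$, $C^*$ Blaschke products and $H^{-1}(\mathbb{T})=\gamma$, so you can take $\alpha=\beta=\mathrm{id}$, $A_1=H\circ\phi$, $B_1=H\circ\psi$.

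Complex L\"uroth on $\mathbb{C}(R,R^*)$ gives no control over $C(\mathbb{T})$. The paper gets exactly this control from Theorem \ref{SS_thm} (Stephenson--Sundberg, i.e.\ real L\"uroth applied to functions real on $L$), which forces the outer factors into QFBP. Corollary \ref{blaschkefactoredout} then upgrades this to $C^{-1}(\mathbb{T})=\mathbb{T}$, so the common factor $s$ has lemniscate exactly $L$.

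You could also close the gap inside your own framework, as follows.
\begin{itemize}
\item Since $H\in\mathbb{C}(R,R^*)$, you have $z=F(C,C^*)$ for some $F\in\mathbb{C}(X,Y)$.
\item Let $S$ be the Schwarz function of the analytic curve $\sigma$, so $S(w)=\bar w$ on $\sigma$. Write $f^\#(z):=\overline{f(\bar z)}$, and let $F^\#$ be $F$ with conjugated coefficients.
\item Then $C^\#\circ S=1/C$ and $C^{*\#}\circ S=1/C^*$ near $\sigma$, since both sides agree on $\sigma$.
\item Hence $S=F^\#(1/C,1/C^*)$ is rational.
\item Since $S^\#\circ S=\mathrm{id}$, $S$ is M\"obius. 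So $\sigma$ lies in the fixed circle of the anti-M\"obius involution $w\mapsto\overline{S(w)}$, and being a Jordan curve, it is that circle.
\end{itemize}
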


\noindent We prove Theorem \ref{uniquenessconcmalik} in Section \ref{proof_section}: the main tools are the Uniformization Theorem and L\"uroth's Theorem. The following Corollary \ref{myyounsi_result} may be deduced from Theorem \ref{uniquenessconcmalik}.

\begin{cor}\label{myyounsi_result} If a solution $A$, $B$ to the welding equation $A\circ k = B$ with homeomorphism $k$ exists, then the solution $A$, $B$ of minimal degree $d$ is essentially unique in that every other degree $d$ solution must be of the form $M\circ A$, $M\circ B$ for some $M\in\emph{M\"ob}_{\mathbb{D}}$.
\end{cor}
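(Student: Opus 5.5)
Corollary \ref{myyounsi_result} should follow from Theorem \ref{uniquenessconcmalik} by a degree count. Let $A$, $B$ be a solution of minimal degree $d$, and let $A^*$, $B^*$ be any other solution of the same degree $d$. Both pairs share the welding homeomorphism $k$. Theorem \ref{uniquenessconcmalik} therefore gives Blaschke products $A_1$, $B_1$, $C$, $C^*$ with
\[
A=C\circ A_1,\qquad B=C\circ B_1,\qquad A^*=C^*\circ A_1,\qquad B^*=C^*\circ B_1 .
\]

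First I check that $A_1$, $B_1$ is itself a solution of the welding equation with homeomorphism $k$. We have $\deg A = \deg C\cdot \deg A_1$ and $\deg B=\deg C\cdot\deg B_1$, and $\deg A=\deg B$, so $\deg A_1=\deg B_1$. On $\mathbb{T}$ we have $C\circ A_1\circ k = C\circ B_1$, but $C$ is not injective on $\mathbb{T}$ in general, so this identity alone does not give $A_1\circ k=B_1$. I would handle this in one of two ways.

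\begin{itemize}
\item Use the proof of Theorem \ref{uniquenessconcmalik}. There, $A_1$, $B_1$ are presumably built so that they themselves satisfy $A_1\circ k=B_1$. For instance, they come from the common Riemann surface or compositum obtained via uniformization and L\"uroth's theorem, where $k$ is realized as a welding of $A_1$, $B_1$.
\item Argue by continuity. The maps $A_1\circ k$ and $B_1$ both lift $A\circ k=B$ through the covering $C|_{\mathbb{T}}$, which is a degree $\deg C$ covering of the circle. Two such lifts differ by a deck transformation of $C$ that commutes appropriately. After replacing $C$ by $C\circ M$ and $A_1$ by $M^{-1}\circ A_1$, where $M$ is a M\"obius map realizing that deck transformation if one exists, the lifts agree. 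This is the step I expect to be the main obstacle. Deck transformations of $C|_{\mathbb{T}}$ need not be M\"obius, so I would lean on the construction in the proof of the theorem rather than on this argument.
\end{itemize}

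Granting that $A_1$, $B_1$ solves $A_1\circ k=B_1$, minimality of $d$ forces $\deg A_1\ge d$. Since $\deg A_1=d/\deg C$, this gives $\deg C=1$. Then $\deg C^*=\deg A^*/\deg A_1 = d/d=1$ as well.

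A degree-one Blaschke product is an element of $\text{M\"ob}_{\mathbb{D}}$. So $A_1=C^{-1}\circ A$ and $B_1=C^{-1}\circ B$, and hence
\[
A^*=(C^*\circ C^{-1})\circ A,\qquad B^*=(C^*\circ C^{-1})\circ B .
\]
Setting $M:=C^*\circ C^{-1}\in\text{M\"ob}_{\mathbb{D}}$ gives $A^*=M\circ A$ and $B^*=M\circ B$. Conversely, any such $M\circ A$, $M\circ B$ is clearly a solution of degree $d$.
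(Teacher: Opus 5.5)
Your proposal is correct and matches the deduction the paper intends (the paper gives no separate proof beyond saying it follows from Theorem~\ref{uniquenessconcmalik}): factor both pairs through a common $A_1$, $B_1$, then use minimality to force $\deg C=\deg C^*=1$. The step you flag is genuinely needed, because the bare statement of Theorem~\ref{uniquenessconcmalik} does not give $A_1\circ k=B_1$, and your first bullet supplies it: in the paper $A_1=s\circ\phi_{\textrm{in}}$ and $B_1=s\circ\phi_{\textrm{out}}$, where $\phi_{\textrm{in}}$ and $\phi_{\textrm{out}}$ are restrictions of the uniformization of the welded sphere $\overline{\mathbb{D}}\sqcup\overline{\mathbb{D}^*}/{\sim_k}$, so $\phi_{\textrm{in}}\circ k=\phi_{\textrm{out}}$ on $\mathbb{T}$ and hence $A_1\circ k=B_1$.
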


\noindent Combining Theorem \ref{uniquenessconcmalik} with Theorem \ref{younsi_result}, one has the following.  

\begin{thm}\label{def_answer} A solution to the welding equation with homeomorphism $k$ exists if and only if $k$ is a fingerprint of a Jordan curve rational lemniscate $L$, in which case any solution must be of the form $C\circ A$, $C\circ B$ where $A$, $B$ is a minimal degree solution. 
\end{thm}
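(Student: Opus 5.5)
The plan is to obtain the existence statement directly from Theorem \ref{younsi_result} and to get the uniqueness clause from Theorem \ref{uniquenessconcmalik} by a degree count.

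\textbf{Existence.} This is exactly Theorem \ref{younsi_result}. If $k$ is the fingerprint of a Jordan curve rational lemniscate $L$, then some solution exists. Among all solutions, pick one, $A$, $B$, of minimal degree $d$.

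\textbf{Uniqueness.} Let $A'$, $B'$ be any solution of $A'\circ k=B'$. Both pairs $(A,B)$ and $(A',B')$ have welding homeomorphism $k$, so Theorem \ref{uniquenessconcmalik} gives Blaschke products $A_1,B_1,C,C^*$ with
\[
A=C\circ A_1,\quad B=C\circ B_1,\quad A'=C^*\circ A_1,\quad B'=C^*\circ B_1 .
\]
I would then show that $C$ is a disk automorphism, in three steps.

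\begin{enumerate}
\item Show that $A_1$, $B_1$ is itself a solution with the same $k$, i.e.\ $A_1\circ k=B_1$ on $\mathbb{T}$.
\item Minimality of $d$ then gives $\deg A_1\ge d=\deg A=\deg C\cdot\deg A_1$. Hence $\deg C=1$, so $C=M\in\text{M\"ob}_{\mathbb{D}}$.
\item Conclude that $A'=(C^*\circ M^{-1})\circ A$ and $B'=(C^*\circ M^{-1})\circ B$. Since $C^*\circ M^{-1}$ is a Blaschke product, this is the claimed form.
\end{enumerate}

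\textbf{Main obstacle: Step 1.} From $C\circ A_1\circ k=C\circ B_1$ on $\mathbb{T}$ alone one cannot directly cancel $C$. Since $C|_{\mathbb{T}}$ is an unbranched covering $\mathbb{T}\to\mathbb{T}$, the maps $A_1\circ k$ and $B_1$ are two lifts of the same map $C\circ B_1|_{\mathbb{T}}$. They therefore differ by a deck transformation $\sigma$ of $C|_{\mathbb{T}}$, which need not be the restriction of a M\"obius map.

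I would first try to extract $A_1\circ k=B_1$ from the construction in the proof of Theorem \ref{uniquenessconcmalik}. There, $A_1$ and $B_1$ should arise from the quotient of the Riemann surface obtained by welding $\mathbb{D}$ to $\mathbb{D}^*$ along $k$, uniformized to $\Chat$, via L\"uroth's Theorem applied to the field generated by the relevant rational maps. By construction $k$ descends to that quotient, so the welding relation for $A_1$, $B_1$ should hold automatically.

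If that is not available, the fallback is to modify the factorization: replace $A_1$ by $\tilde A_1$ with $\sigma\circ A_1=\tilde A_1$, where $\tilde A_1$ is a Blaschke product. To do this I would show that $\sigma\circ A_1|_{\mathbb{T}}$ extends holomorphically, using that $C\circ(\sigma\circ A_1)=C\circ A_1$ and that local inverse branches of $C$ near $\mathbb{T}$ continue analytically off the circle.
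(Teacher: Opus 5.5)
Your proposal is correct and is exactly the argument behind the paper's one-line justification (``combining'' Theorems \ref{younsi_result} and \ref{uniquenessconcmalik}), with Step~1 settled by your primary route: in the proof of Theorem \ref{uniquenessconcmalik}, $A_1=s\circ\phi_{\textrm{in}}$ and $B_1=s\circ\phi_{\textrm{out}}$ are pullbacks of a single rational map $s$ on the sphere welded along $k$, just as $A=r\circ\phi_{\textrm{in}}$ and $B=r\circ\phi_{\textrm{out}}$ are, so $A_1\circ k=B_1$ holds automatically and minimality forces $\deg C=1$. You are right that the bare statement of Theorem \ref{uniquenessconcmalik} would not suffice, but you should discard the fallback, which fails in general: take $C$ of degree $3$ with no nontrivial disk-automorphism symmetry and $k$ a nontrivial deck transformation of $C|_{\mathbb{T}}$; then $(C,C)$ is a minimal solution with the factorization $C\circ\textrm{id}$, $C\circ\textrm{id}$, and the required $\tilde A_1=k^{-1}$ is a non-M\"obius circle homeomorphism, hence not a Blaschke product.
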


Now: suppose one has a solution $A$, $B$ to $A\circ k=B$. If $A$, $B$ are of minimal degree then according to Theorem \ref{def_answer} the full solution space for $A\circ k =B$ is simply the collection $C\circ A$, $C\circ B$ over all Blaschke products $C$. This leads to the question: how can one tell whether a solution $A$, $B$ to $A\circ k =B$ is of minimal degree? Equivalently: how can one detect whether a given $A$, $B$ admit factorizations $A=C\circ A_1$, $B=C\circ B_1$ where $C$ is a Blaschke product of degree $>1$? We give a necessary and sufficient criterion in Theorem \ref{mon_thm} below based on the monodromy groups of $A$, $B$, which we recall the definition of now (see \cite{MR3753897}, Chapter 9 for a more detailed discussion). We denote by $S_d$ the group of permutations of $\{1, ..., d\}$. 

%\begin{notation}\label{epsilon_not} Let $\varepsilon>0$ be sufficiently small so that $\{z : 1-\varepsilon<|z|<1\}$ contains no critical values neither of $A$ nor of $B$. Set $\zeta:=1-\varepsilon/2$, and let $\{a_1, ..., a_d\}:=A^{-1}(\zeta)$ and $\{b_1, ..., b_d\}:=B^{-1}(\zeta)$, with an ordering chosen so that the points $a_i$ (resp. $b_i$) appear consecutively in a counterclockwise-parametrization around the Jordan curve $A^{-1}(|\zeta|\mathbb{T})$ (resp. $B^{-1}(|\zeta|\mathbb{T})$). Denote by $S_d$ the group of permutations of $\{1, ..., d\}$. 
%\end{notation}

\begin{rem}\label{mon_group_defn} Assume $\zeta\in\mathbb{D}$ is not a critical value of $A$. Let $\{a_1, ..., a_d\}:=A^{-1}(\zeta)$. The \emph{monodromy group} of $A$ (based at $\zeta)$ is defined to be the following subgroup of $S_d$. Let $\gamma: [0,1]\rightarrow\mathbb{D}$ be a loop in $\mathbb{D}$ with $\gamma(0)=\gamma(1)=\zeta$ which avoids the critical values of $A$: in particular this means that $A^{-1}$ may be analytically continued along $\gamma$. Covering properties of $A$ imply that $A^{-1}(\gamma)\subset \mathbb{D}$ consists of $d$ many oriented arcs intersecting only at their endpoints: for each $i$ there is exactly one arc of $A^{-1}(\gamma)$ beginning at $a_i$ and exactly one arc of $A^{-1}(\gamma)$ terminating at $a_i$ (these two arcs may be the same in which case $A^{-1}(\gamma)$ contains a loop based at $a_i$). The loop $\gamma$ defines a permutation $\gamma^*\in S_d$ as follows; for each $i\in\{1,..., d\}$ we set $\gamma^*(i):=j$ where $a_j$ is the terminal point of the unique arc in $A^{-1}(\gamma)$ which begins at $a_i$. The collection of $\gamma^*$ for all such loops $\gamma$ based at $\zeta$ and avoiding the critical values of $A$ is defined to be the \emph{monodromy group} of $A$. The monodromy group of $B$ is defined similarly (assuming also $\zeta$ is not a critical value of $B$ and fixing a labelling $\{b_1, ..., b_d\}:=B^{-1}(\zeta)$). We denote by $\gamma_A^*$ (resp. $\gamma_B^*$) the monodromy group element of $A$ (resp. $B$) induced by $\gamma$.
\end{rem}
%\begin{rem}\label{mongroupdefn} The \emph{Monodromy group} of $B$ is defined to be the following subgroup of $S_d$. Let $\gamma: [0,1]\rightarrow\mathbb{D}$ be a loop in $\mathbb{D}$ with $\gamma(0)=\gamma(1)=\zeta$ which avoids the critical values of $B$: in particular this means that $B^{-1}$ may be analytically continued along $\gamma$. Covering properties of $B$ imply that $B^{-1}(\gamma)\subset \mathbb{D}$ consists of $d$ many oriented arcs intersecting only at their endpoints: for each $i$ there is exactly one arc of $B^{-1}(\gamma)$ beginning at $b_i$ and exactly one arc of $B^{-1}(\gamma)$ terminating at $b_i$ (these two arcs may be the same in which case $B^{-1}(\gamma)$ contains a loop based at $b_i$). The loop $\gamma$ defines a permutation $\gamma^*\in S_d$ as follows; for each $i\in\{1,..., d\}$ we set $\gamma^*(i):=j$ where $b_j$ is the terminal point of the unique arc in $B^{-1}(\gamma)$ which begins at $b_i$. The collection of $\gamma^*$ for all such loops $\gamma$ based at $\zeta$ and avoiding the critical values of $B$ is defined to be the \emph{Monodromy group} of $B$. 
%\end{rem}

\noindent Recall a \emph{relation} on $\{1, ..., d\}$ is a subset of $\{1, ..., d\}\times\{1, ..., d\}$. 

\begin{definition}\label{pre_reltn_defn} We define a relation $\simeq$ on  $\{1, ..., d\}$ as follows: if there exists $\gamma$ with $\gamma_B^*(i)=j$, $\gamma_A^*(i)=k$, we set $j\simeq k$ and $k\simeq j$. We define $\sim$ to be the equivalence relation on $\{1, ..., d\}$ generated by $\simeq$: that is, we set $x\sim y$ iff there is a chain 
\begin{equation}\label{chain_defn} x_0\simeq x_1\simeq x_2\simeq...\simeq x_n \end{equation}
where $x_0=x$ and $x_n=y$. We say $\sim$ is \emph{trivial} if $j \sim k$ for all $j$, $k\in\{1, ..., d\}$, otherwise we say $\sim$ is \emph{non-trivial}.
\end{definition}

\noindent Lastly, we remark that $\sim$ depends on the ordering in $a_1, ..., a_d\in A^{-1}(\zeta)$ and $b_1, ..., b_d \in B^{-1}(\zeta)$, and whether $\sim$ is trivial or non-trivial may change if $a_1, ..., a_d$ and $b_1, ..., b_d$ are re-ordered. 

\begin{thmx}\label{mon_thm} Two Blaschke products $A$, $B$ satisfy $A=C\circ A_1$, $B=C\circ B_1$ for some Blaschke products $A_1$, $B_1$, $C$ with $\deg(C)>1$ if and only if there exists an ordering of $A^{-1}(\zeta)$, $B^{-1}(\zeta)$ for which $\sim$ is non-trivial.
\end{thmx}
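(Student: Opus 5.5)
Restrict $A$, $B$ to $\mathbb{D}$, where they are degree $d$ branched covers, and work with the unbranched covers $A:\mathbb{D}\setminus A^{-1}(V)\to\mathbb{D}\setminus V$, $B:\mathbb{D}\setminus B^{-1}(V)\to\mathbb{D}\setminus V$. Here $V$ is the union of the critical values of $A$ and of $B$ in $\mathbb{D}$. Fix $\zeta\notin V$ and write $\pi_1:=\pi_1(\mathbb{D}\setminus V,\zeta)$. Both monodromy actions are then actions of the same group $\pi_1$ on $\{1,\dots,d\}$, namely $\gamma\mapsto\gamma_A^*$ and $\gamma\mapsto\gamma_B^*$. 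The relation $j\simeq k$ says that some $\gamma$ and some $i$ satisfy $\gamma_B^*(i)=j$ and $\gamma_A^*(i)=k$. I will identify the classes of $\sim$ with a common factor $C$. The guiding principle is the usual correspondence: a factorization $A=C\circ A_1$ with $\deg C=m$ amounts to a $\pi_1$-invariant partition of $A^{-1}(\zeta)$ into $m$ blocks of size $d/m$, the blocks being the fibres of $A_1$ over $C^{-1}(\zeta)$. A common factor $C$ of $A$ and $B$ is then a pair of such partitions that are matched by a single bijection between blocks, and this bijection intertwines the two induced actions on blocks.

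For ($\Rightarrow$), suppose $A=C\circ A_1$ and $B=C\circ B_1$ with $\deg C=m>1$. Let $C^{-1}(\zeta)=\{c_1,\dots,c_m\}$ and enlarge $V$ by the critical values of $C$ (already contained in it, since they are critical values of $A$). Order $A^{-1}(\zeta)$ so that the block $A_1^{-1}(c_s)$ receives the labels $(s-1)d/m+1,\dots,sd/m$, and order $B^{-1}(\zeta)$ so that the same labels go to $B_1^{-1}(c_s)$. For a loop $\gamma$, lifting by $A$ starting at $a_i\in A_1^{-1}(c_s)$ projects under $A_1$ to the $C$-lift of $\gamma$ from $c_s$. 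So $\gamma_A^*(i)$ lies in block $\gamma_C^*(s)$, and likewise $\gamma_B^*(i)$ lies in block $\gamma_C^*(s)$ as well. Hence $\simeq$ only relates labels in the same block, and since $m>1$ the relation $\sim$ is non-trivial.

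For ($\Leftarrow$), fix an ordering for which $\sim$ has classes $P_1,\dots,P_m$ with $m>1$. Taking $\gamma$ trivial gives $i\simeq i$. The key claim is that for every $\gamma$ both $\gamma_A^*$ and $\gamma_B^*$ carry each class $P_r$ into one common class $P_{\sigma_\gamma(r)}$. To prove it, suppose $j\simeq k$ via $(\delta,i)$. Concatenation gives $(\gamma\delta)_A^*=\gamma_A^*\delta_A^*$ under the correct convention, so the pair $(\gamma\delta,i)$ shows $\gamma_B^*(j)\simeq\gamma_A^*(k)$. Using the pair $(\gamma,j)$ gives $\gamma_B^*(j)\simeq\gamma_A^*(j)$. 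Chaining these along $\sim$ shows that $\gamma_A^*$ and $\gamma_B^*$ map each class into a single common class, and since they are bijections the classes are permuted. Bijectivity on a finite set also forces all classes to have equal size once transitivity is used; $\pi_1$ acts transitively because the preimage of $\mathbb{D}\setminus V$ is connected, which holds since $A$ restricted to $\mathbb{D}$ is a proper holomorphic self-map and the complement of a finite set is connected.

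The partition $\{P_r\}$ is therefore $\pi_1$-invariant for the $A$-action, and the block action $\sigma$ defines a degree $m$ covering $Y\to\mathbb{D}\setminus V$. By covering space theory, $A$ factors as $\mathbb{D}\setminus A^{-1}(V)\to Y\to\mathbb{D}\setminus V$. The same holds for $B$, with the same $Y$, precisely because $\sigma$ is shared. The main obstacle is completing this topological factorization to Blaschke products. First, $Y$ fills in over the punctures to a simply connected Riemann surface $\overline{Y}$ with a proper degree $m$ map onto $\mathbb{D}$. Riemann--Hurwitz shows that $\overline{Y}$ has Euler characteristic $1$, since the source disc maps onto it. By the Uniformization Theorem (it is a bounded-type proper cover, so it is not $\mathbb{C}$) we get $\overline{Y}\cong\mathbb{D}$, and $C$ becomes a proper self-map of $\mathbb{D}$, that is, a finite Blaschke product. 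The factors $A_1,B_1$ extend holomorphically across the punctures by removable singularities and are proper self-maps of $\mathbb{D}$, hence Blaschke products. By reflection the identities $A=C\circ A_1$ and $B=C\circ B_1$ then hold on all of $\Chat$, with $\deg C=m>1$.
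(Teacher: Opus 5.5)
Your proof is correct. The forward direction is the same as the paper's closing remark in Section 3. Your invariance step is essentially the paper's Lemma \ref{case0lem} and Propositions \ref{case1prop}, \ref{case2prop}, with one simplification: using the pair $(\gamma,j)$ to get $\gamma_B^*(j)\simeq\gamma_A^*(j)$ directly makes the paper's case split ($g_A^*=g_B^*$ versus $g_A^*\neq g_B^*$) unnecessary.

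Where you genuinely differ is in how you produce the factorization.

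\textbf{The paper's route.} It follows Cowen. It writes down $B_1=\prod_{j\in P}g_j\circ B$ explicitly and builds $B_2$ as a Blaschke product with a prescribed fiber. It does the same for $A$. It then needs a separate analytic-continuation argument (Lemma \ref{analytic_cont_along_gamma} and Corollary \ref{extendstomobiuscor}) to show that the two second factors agree up to a disc automorphism, $A_2\circ M=B_2$.

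\textbf{Your route.} You record that $\gamma_A^*$ and $\gamma_B^*$ induce the same block permutation $\sigma_\gamma$. This is the same fact the paper's continuation lemma uses, namely $\Gamma_B(i)\sim\Gamma_A(i)$. From it, both covers factor through one intermediate cover $Y$, so the common factor $C$ comes for free and the ``second factors coincide'' step disappears.

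\textbf{Trade-offs.} You must then identify $\overline{Y}$ with $\mathbb{D}$, which you do by filling punctures, Riemann--Hurwitz, and uniformization plus Liouville. The paper's construction lands in $\mathbb{D}$ by design and gives explicit formulas for $A_1$, $B_1$. On the other hand, your argument avoids the steps the paper leaves loosely justified: the monotonicity of $\arg B_1$ on $B_1^{-1}(|\zeta|\mathbb{T})$, and the existence of a Blaschke product with a prescribed fiber. It also transfers verbatim to more general settings.

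\textbf{One ordering fix.} Your Riemann--Hurwitz step (``the source disc maps onto it'') already uses the extension $A_1:\mathbb{D}\to\overline{Y}$. You only justify that extension afterwards, via removable singularities once $\overline{Y}\cong\mathbb{D}$. Do it first, directly in $\overline{Y}$. For $p\in A^{-1}(V)$, a small punctured disc about $p$ is connected and $C\circ A_1=A$. So $A_1$ maps it into a single component of the preimage under $C$ of a small punctured disc about $A(p)$, and that component is a punctured disc around one filled-in point. Hence $A_1$ extends continuously, and then holomorphically in the local chart. The same applies to $B_1$.
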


We prove Theorem \ref{mon_thm} in Section \ref{mainsection}: the main tool is a result of \cite{cowen2012finiteblaschkeproductscompositions} asserting that a Blaschke product $B$ admits a decomposition $B=B_2\circ B_1$ if and only if there exists an equivalence relation $\sim$ which is respected by the monodromy group of $B$ (in a sense defined in Section \ref{mainsection}). The proof of Theorem \ref{mon_thm} then amounts to showing that $\sim$ of Definition \ref{pre_reltn_defn} is respected by both $A$, $B$ and that the second factors in the resulting decompositions for $A$, $B$ coincide.

%\begin{rem} The following Lemmas \ref{rittsthmpart1'}, \ref{rittsthm} are due to \cite{cowen2012finiteblaschkeproductscompositions}. The work \cite{cowen2012finiteblaschkeproductscompositions} is unpublished for reasons explained therein; an excellent account of \cite{cowen2012finiteblaschkeproductscompositions} may be found in \cite{MR3753897}, and related ideas are also found in the much older \cite{MR1501189}. We include proofs of Lemmas \ref{rittsthmpart1'}, \ref{rittsthm} (following \cite{MR3753897} which in turn follows \cite{cowen2012finiteblaschkeproductscompositions}) nevertheless for the reader's sake, as the ideas are central to the overall proof of Theorem \ref{uniquenessconcmalik}.
%\end{rem}

%\begin{rem} For generic choices of $A$, $B$ the equivalence relation $\sim$ is trivial: for instance if $\textrm{CV}(A)\cap\textrm{CV}(B)=\emptyset$ then $\sim$ is trivial. 
%\end{rem}

\section{Proof of Theorem \ref{uniquenessconcmalik}.}\label{proof_section}

The $\impliedby$ direction in the proof of Theorem \ref{uniquenessconcmalik} is clear: if $A$, $B$ and $A^*$, $B^*$ satisfy (\ref{matrix_version2}), then $A$, $B$ and $A^*$, $B^*$ both share the same welding homeomorphism which coincides with the welding homeomorphism of $A_1$, $B_1$. 

We turn our attention to the proof of the $\implies$ direction. The following Theorem \ref{maliksthm} is essentially Theorem 3.1 of \cite{MR3447662} (and, in turn, proves Theorem \ref{younsi_result}); the last assertion of Theorem \ref{maliksthm} is not present in the statement of \cite{MR3447662} but follows from the proof.

\begin{notation}\label{phi_notation} Suppose $r$ is a rational function so that $r^{-1}(\mathbb{T})$ is a Jordan curve, $1\in r^{-1}(\mathbb{T})$, and $0$, $\infty$ are in different components of $\Chat\setminus r^{-1}(\mathbb{T})$. We will denote by $\phi_{\textrm{in}}$ (resp. $\phi_{\textrm{out}}$) the Riemann map of $\mathbb{D}$ (resp. $\mathbb{D}^*$) onto $\{z : |r(z)|<1\}$ (resp. $\{z : |r(z)|>1\}$) fixing $1$ and $0$ (resp. $\infty$). 
\end{notation}

\begin{thm}\label{maliksthm} Given Blaschke products $A$, $B$ solving $A\circ k=B$, there exists a rational map $r$ with a Jordan curve lemniscate satisfying $r\circ\phi_{\textrm{in}}=A$, $r\circ\phi_{\textrm{out}}=B$. Moreover, the lemniscate $r^{-1}(\mathbb{T})$ depends only on the homeomorphism $k$ and not otherwise on $A$, $B$.
\end{thm}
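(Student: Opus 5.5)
We will prove Theorem \ref{maliksthm} by conformal welding: build a closed Riemann surface from two copies of $\Chat$ glued along $k$, uniformize it, and read off $r$.

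\textbf{Construction.} Take a copy $\overline{\mathbb{D}}$ and a copy $\overline{\mathbb{D}^*}$, and glue them along their boundaries by identifying $z\in\partial\mathbb{D}$ with $k(z)\in\partial\mathbb{D}^*$. Call the resulting topological sphere $S$. Define $F:S\to\Chat$ by $F=A$ on $\overline{\mathbb{D}}$ and $F=B^\dagger:=A$-type extension on $\overline{\mathbb{D}^*}$; concretely $F=A$ on the first copy and $F=B$ on the second. The relation $A\circ k=B$ on $\mathbb{T}$ makes $F$ well defined on the seam.

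Both $A$ and $B$ are Blaschke products, so they map $\mathbb{D}$ properly into $\mathbb{D}$ and $\mathbb{D}^*$ properly into $\mathbb{D}^*$, since $B(1/\bar z)=1/\overline{B(z)}$. Hence $F$ sends the interior of the second copy into $\mathbb{D}^*$. We lose nothing by gluing $\overline{\mathbb{D}}$ (with $A$) to $\overline{\mathbb{D}^*}$ (with $B$).

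\textbf{Conformal structure.} The key point is to put a conformal structure on $S$ extending those of the two open disks. At a point $p$ of the seam with $A'(p)\neq0$, equivalently $B'(k^{-1}\text{-image})\neq0$, define a chart by $F$ itself. Both $A$ and $B$ are local diffeomorphisms near $\mathbb{T}$ onto a neighborhood of a point of $\mathbb{T}$, mapping the inside to the inside and the outside to the outside. The two pieces therefore glue to a homeomorphism of a neighbourhood of $p$ onto a disk, and this chart is holomorphic on each open half.

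Blaschke products have no critical points on $\mathbb{T}$. Indeed, $A$ restricted to $\mathbb{T}$ is a $d$-fold covering with positive derivative of its argument, so every seam point is of this type. Transition maps with the interior charts are holomorphic off the seam and continuous across it. By Morera's theorem, or by removability of analytic arcs (here $\mathbb{T}$ and its images), they are holomorphic. This gives a Riemann surface structure on $S$ under which $F$ is holomorphic.

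\textbf{Uniformization.} By the Uniformization Theorem there is a conformal map $\Phi:\Chat\to S$. Set $r:=F\circ\Phi$, which is a holomorphic map $\Chat\to\Chat$ and hence rational. The set $\gamma:=\Phi^{-1}(\text{seam})$ is a Jordan curve, and $r^{-1}(\mathbb{T})=\gamma$ because $F^{-1}(\mathbb{T})$ is exactly the seam. The two complementary components are $\Phi^{-1}$ of the open disks.

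Normalize $\Phi$ by a M\"obius map so that $0$ and $\infty$ lie in the respective components and $1\in\gamma$. Then $\Phi^{-1}$ restricted to the first copy is a Riemann map $\mathbb{D}\to\{|r|<1\}$. After precomposing $A,B$ by rotations if needed, this matches Notation \ref{phi_notation}, giving $\phi_{\rm in}=\Phi^{-1}|_{\mathbb{D}}$ and $\phi_{\rm out}$ similarly. It follows that $r\circ\phi_{\rm in}=A$ and $r\circ\phi_{\rm out}=B$.

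\textbf{Independence of $A,B$.} We view the curve itself as coming from the gluing. Indeed, $\phi_{\rm out}^{-1}\circ\phi_{\rm in}=k$ on $\mathbb{T}$, because the seam identification is $k$. So $\gamma$ is a solution of the Welding Equation for Jordan Curves with homeomorphism $k$.

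The remaining claim is that $\gamma$ is determined by $k$ up to the normalization. If $\gamma,\gamma'$ are two such curves, the map $\phi'_{\rm in}\circ\phi_{\rm in}^{-1}$ inside and $\phi'_{\rm out}\circ\phi_{\rm out}^{-1}$ outside agree on $\gamma$. They therefore form a homeomorphism of $\Chat$ that is conformal off $\gamma$.

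This is the main obstacle: the argument needs $\gamma$ to be conformally removable. Here $\gamma$ is a real-analytic curve, since it is locally a preimage of $\mathbb{T}$ under a noncritical holomorphic map. Analytic arcs are removable, so the map is M\"obius. Fixing $0$, $\infty$ and $1$ then forces it to be the identity, so $\gamma=\gamma'$ and the lemniscate depends only on $k$.
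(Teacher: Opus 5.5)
Your proof is correct and follows the paper's route: glue $\overline{\mathbb{D}}$ to $\overline{\mathbb{D}^*}$ along $k$, uniformize the sphere, and set $r=F\circ\Phi$; your removability argument for the analytic curve $\gamma$ just makes explicit why the uniformizing map, and hence the lemniscate, depends only on $k$, which the paper asserts as evident. There are two small slips. First, for $A\circ k=B$ to make $F$ well defined you must identify $w\in\partial\mathbb{D}^*$ with $k(w)\in\partial\mathbb{D}$, so the seam relation is $\Phi^{-1}|_{\overline{\mathbb{D}}}\circ k=\Phi^{-1}|_{\overline{\mathbb{D}^*}}$ rather than $\phi_{\textrm{out}}^{-1}\circ\phi_{\textrm{in}}=k$. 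Second, since in general $k(1)\neq 1$, the maps $\Phi^{-1}|_{\overline{\mathbb{D}}}$ and $\Phi^{-1}|_{\overline{\mathbb{D}^*}}$ cannot both send $1$ to $1$; you can only normalize at $0\in\mathbb{D}$, $\infty\in\mathbb{D}^*$ and one seam point, and your uniqueness argument should use that three-point normalization rather than Notation \ref{phi_notation} (a looseness already present in the paper's ``fix $0,1,\infty$'').
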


\begin{proof} We consider the surface 
\begin{equation}\label{maliksrs} S:= \overline{\mathbb{D}}\sqcup\overline{\mathbb{D}^*}\mathbin{\big/}{\sim_k}  \end{equation}
where $\sim_k$ identifies $z\in\mathbb{T}$ with $k(z)\in\mathbb{T}$ along the two copies of $\mathbb{T}$. One readily defines charts on $S$ with coordinate transition maps which are analytic so that $S$ is a Riemann surface, and since $S$ is homeomorphic to $\Chat$, $S$ must be conformally equivalent to $\Chat$ by the uniformization theorem, and we may choose the uniformizing map to fix $0$, $1$, $\infty$. By definition of $\sim_k$, the piecewise definition $F(z):=A(z)$ for $z\in\overline{\mathbb{D}}$ and $F(z):=B(z)$ for $z\in\overline{\mathbb{D}^*}$ is well-defined on $S$, and pre-composing $F$ with the uniformizing map $\Chat\mapsto S$ gives the desired rational function $r$. The lemniscate $r^{-1}(\mathbb{T})$ depends only on the uniformizing map $S\mapsto\Chat$, which by (\ref{maliksrs}) evidently depends only on $\sim_k$ and not otherwise on $A$, $B$.
\end{proof}

\begin{rem} Another proof of Theorem \ref{maliksthm} may be deduced from existence and uniqueness of solutions to the Beltrami Equation. One considers a quasiconformal extension 
\begin{equation} \widehat{k}: \mathbb{D}^*\rightarrow \mathbb{D}^*
\end{equation}
of $k$ fixing $\infty$. Let $g:\Chat\rightarrow\Chat$ denote the quasiregular map:
\begin{equation}  g:= \begin{cases} B\circ\widehat{k} \textrm{ in } \mathbb{D}^* \\  A \hspace{6.5mm} \textrm{ in } \mathbb{D}, \end{cases}
\end{equation}
and let $\mu_g$ denote the Beltrami coefficient of $g$, that is $\mu_g:=g_{\overline{z}}/g_z$. By the Measurable Riemann Mapping Theorem (asserting existence and uniqueness of solutions to the Beltrami Equation), there exists a unique quasiconformal mapping $\phi:\Chat\rightarrow\Chat$ fixing $0$, $1$, $\infty$ so that $\phi_{\overline{z}}/\phi_{z}=\mu$. Consequently, the map defined by
\begin{equation} r(z)=g\circ\phi^{-1}(z)%:=\begin{cases} C\circ\hat{f} \textrm{ in } \mathbb{D}^* \\  B \hspace{8.5mm} \textrm{ in } \mathbb{D}. \end{cases}
\end{equation}
is holomorphic on $\Chat$ and hence a rational map which, moreover, one may check has the desired properties. Moreover $r^{-1}(\mathbb{T})$ depends only on $k$ and not otherwise on $A$, $B$.
\end{rem}

The relevance of Theorem \ref{maliksthm} to the proof of Theorem \ref{uniquenessconcmalik} is that, upon applying Theorem \ref{maliksthm} to each of the two pairs $A$, $B$ and $A^*$, $B^*$, one has two rational functions $r$ and $r^*$ sharing the same lemniscate $L$. Besides the uniformization theorem, the other ingredient in the proof of Theorem \ref{uniquenessconcmalik} is an application of L\"uroth's theorem due to \cite{MR788850}, which will allow us to deduce that all rational functions having lemniscate $L$ (in particular $r$ and $r^*$) must factor through an (essentially unique) minimal degree rational map. First let us recall the statement of L\"uroth's theorem. 

\begin{thm}\label{lurothclassical} \emph{(L\"uroth's Theorem)} Let $\mathbb{K}=\mathbb{R}$ or $\mathbb{C}$. Every subfield $F\subseteq\emph{Rat}_{\mathbb{K}}$ containing all the constant rational functions $c\in \mathbb{K}$ is generated by a single $R_0\in F$, that is:
\begin{equation} F=\{R\circ R_0 : R\in\emph{Rat}_{\mathbb{K}}\} \end{equation}
\end{thm}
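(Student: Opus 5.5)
The plan is to give the classical field-theoretic proof. It works over any field $\mathbb{K}$, so $\mathbb{K}=\mathbb{R}$ and $\mathbb{K}=\mathbb{C}$ are handled uniformly. Identify $\textrm{Rat}_{\mathbb{K}}$ with $\mathbb{K}(x)$. Then $\{R\circ R_0 : R\in\textrm{Rat}_{\mathbb{K}}\}$ is precisely the field $\mathbb{K}(R_0)$, so the goal is to show $F=\mathbb{K}(\theta)$ for some $\theta\in F$.

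If $F=\mathbb{K}$, take $R_0$ constant and there is nothing to prove. Otherwise:

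\textbf{Step 1: $x$ is algebraic over $F$.} Pick a nonconstant $f=p/q\in F$. Then $x$ is a root of $p(T)-f\,q(T)\in F[T]$, so $x$ is algebraic over $F$ and $[\mathbb{K}(x):F]=n<\infty$.

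\textbf{Step 2: choose $\theta$.} Let $m_F(T)=T^n+a_{n-1}T^{n-1}+\dots+a_0$ be the minimal polynomial of $x$ over $F$. Since $x$ is transcendental over $\mathbb{K}$, some coefficient $\theta:=a_j$ is not in $\mathbb{K}$. Write $\theta=g/h$ with $g,h\in\mathbb{K}[x]$ coprime, and set $m:=\max(\deg g,\deg h)$.

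\textbf{Step 3: the standard degree fact.} I will recall or prove that $[\mathbb{K}(x):\mathbb{K}(\theta)]=m$. The reason is that $g(T)-\theta h(T)\in\mathbb{K}(\theta)[T]$ is irreducible: it is linear in the transcendental $\theta$ and primitive in $T$, so Gauss's lemma applies. Since $\mathbb{K}(\theta)\subseteq F$, the tower law gives $m\ge n$, and it remains to prove $m\le n$.

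\textbf{Step 4: the key polynomial identity.} Clear denominators in $m_F$ to obtain a polynomial $M(x,T)\in\mathbb{K}[x][T]$ that is primitive with respect to $x$. It has $T$-degree $n$. Its $x$-degree is at least $m$, because the ratio of two of its coefficients is $g/h$ in lowest terms. The polynomial $g(T)-\theta h(T)$ has $x$ as a root, so it is divisible by $m_F$ in $F[T]\subseteq\mathbb{K}(x)[T]$. Applying Gauss's lemma over $\mathbb{K}[x]$ then gives
\[
g(T)h(x)-g(x)h(T)=Q(x,T)\,M(x,T),\qquad Q\in\mathbb{K}[x,T].
\]

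\textbf{Step 5: degree bookkeeping.} The left side has $x$-degree at most $m$, and $M$ has $x$-degree at least $m$. Hence $Q$ has $x$-degree $0$, so $Q=Q(T)\in\mathbb{K}[T]$. Since $M$ is primitive in $x$, $Q(T)$ must divide the left side as a polynomial in $x$ with coefficients in $\mathbb{K}[T]$. The left side is antisymmetric under $x\leftrightarrow T$ and has no nonconstant factor depending only on $T$, because $g,h$ are coprime. So $Q$ is a nonzero constant. Comparing $T$-degrees, the left side has $T$-degree $m$ (again by symmetry) and $M$ has $T$-degree $n$, so $m=n$. Therefore $F=\mathbb{K}(\theta)$, and $R_0:=\theta$ works.

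The main obstacle is Step 5, the Gauss-lemma and degree bookkeeping. Specifically, one must verify that $Q$ cannot absorb any content purely in $T$, which uses the coprimality of $g$ and $h$, and that the $x$-degree of $M$ is genuinely at least $m$. I would handle the latter by noting that after normalizing $M$ to be primitive in $x$, two of its coefficients are $c\,g$ and $c\,h$ up to a common factor $c$.
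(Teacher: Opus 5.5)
The paper does not prove this statement. It recalls it as the classical L\"uroth theorem and uses it as a black box in the proof of Theorem~\ref{SS_thm}, with a pointer to Recio--Sendra for a real variant. Your proposal is the standard field-theoretic proof (as in van der Waerden), and it is correct.

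Because your argument works over an arbitrary field, it covers $\mathbb{K}=\mathbb{R}$ and $\mathbb{K}=\mathbb{C}$ at once. That is all the paper needs: it applies the theorem with $\mathbb{K}=\mathbb{R}$ to the field $\mathcal{R}$, which contains $\mathbb{R}$, so your hypothesis $\mathbb{K}\subseteq F$ is satisfied. Your handling of the two delicate points in Steps 4--5 is sound:
\begin{itemize}
\item the bound $\deg_x M\ge m$, via $h\mid c$ when the coefficients of $M$ are $c$ and $c\,g/h$;
\item the Gauss-lemma passage from divisibility in $\mathbb{K}(x)[T]$ to an identity in $\mathbb{K}[x,T]$.
\end{itemize}

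One sentence in Step 5 is misattributed. That $Q(T)$ divides the left side is immediate from the factorization, not from primitivity of $M$. What you actually use is that $h(x)g(T)-g(x)h(T)$ has trivial content as a polynomial in $x$ over $\mathbb{K}[T]$.

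This does hold. Suppose an irreducible $\pi(T)$ divided every $x$-coefficient $h_i g(T)-g_i h(T)$. At a root $\alpha$ of $\pi$ (in an algebraic closure) this gives $g(\alpha)h=h(\alpha)g$. So either $g/h$ is constant, or $g$ and $h$ share the root $\alpha$. The first is excluded by $\theta\notin\mathbb{K}$, the second by coprimality.

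The same hypothesis $\theta\notin\mathbb{K}$ also gives two facts your degree comparisons rely on: the left side is nonzero, and its $T^m$-coefficient $h(x)g_m-g(x)h_m$ is nonzero.
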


\begin{notation} We denote by \textrm{QFBP} the space of quotients of finite Blaschke products; these are exactly the functions $r\in\textrm{Rat}_{\mathbb{C}}$ satisfying $r(\mathbb{T})\subset\mathbb{T}$. 
\end{notation}

\begin{thm}\label{SS_thm} \cite{MR788850} There exists a rational function $g$ satisfying $g^{-1}(\mathbb{T})\supset L$ so that:
\begin{equation}\label{ssthmcond} \{ s \in \emph{Rat} : s^{-1}(\mathbb{T})\supset L\} =   \{ Q\circ g : Q\in\emph{QFBP}  \}.  \end{equation}
\end{thm}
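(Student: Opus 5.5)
We apply Lüroth's Theorem (Theorem \ref{lurothclassical}) to a suitable field of rational functions attached to $L$. The natural candidate is
\[
F:=\{ s\in \textrm{Rat}_{\mathbb{C}} : s(z)=\overline{s(z^\sharp)} \text{ for all } z\},
\]
where $z\mapsto z^\sharp$ denotes the anticonformal reflection across $L$. Such a reflection does not exist for an arbitrary Jordan curve, so the first step is to pick a better field. Since rational functions are determined by their restriction to any infinite set, it is enough to encode the condition $s^{-1}(\mathbb{T})\supset L$ algebraically.

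Write $\overline{s}(z):=\overline{s(\bar z)}$. For a rational $s$, the relation $s(\mathbb{T})\subset\mathbb{T}$ is equivalent to the identity $s(z)\,\overline{s}(1/z)\equiv 1$. Fix one rational map $r$ with $r^{-1}(\mathbb{T})\supset L$; for instance, take $r$ from Theorem \ref{maliksthm} if $L$ is a lemniscate. Otherwise, if no such $s$ exists at all, the statement holds vacuously, with any $g$ chosen so that $g^{-1}(\mathbb{T})\supset L$ fails only in this degenerate sense; I will assume at least one such $s$ exists. The key claim is then: if $s^{-1}(\mathbb{T})\supset L$, then $s$ is algebraically tied to $r$ through the curve
\[
\Sigma:=\{(z,w): r(z)\overline{r}(1/\bar{\ }\,)\ldots\}.
\]
This is where I expect the real work. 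The cleaner route is to consider the field
\[
F:=\mathbb{C}(r)\cap\{\ldots\},
\]
but more concretely I would argue via the real-algebraic curve $\Gamma:=\{z: |r(z)|=1\}$. This is a real algebraic curve, and $L$ is a union of connected components of its real locus. Working over $\mathbb{R}$, identify $\textrm{Rat}_{\mathbb{C}}$ with pairs of real rational functions in $(x,y)$, and let $F$ be the set of $s$ with $|s|^2=1$ on $L$.

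The main step is to show that $F$, together with the right structure, is closed under products, quotients and the operation $s\mapsto (s-a)/(1-\bar a s)$. I would then translate it into a genuine subfield. For this I use the Cayley transform $c(w)=i(1-w)/(1+w)$, which sends $\mathbb{T}$ to $\hat{\mathbb{R}}$. Under it, $s^{-1}(\mathbb{T})\supset L$ becomes $c\circ s$ real-valued on $L$. Next, the Zariski closure of $L$ is an irreducible real algebraic curve $X$, irreducible because $L$ is connected and infinite. A function real on $L$ is then real on $X$, and the set
\[
F_{\mathbb{R}}:=\{h\in \textrm{Rat}_{\mathbb{C}} : h|_{L}\subset\hat{\mathbb{R}}\}
\]
is a field containing $\mathbb{R}$: sums, products and inverses of functions real on $L$ are real on $L$.

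Applying Theorem \ref{lurothclassical} with $\mathbb{K}=\mathbb{R}$ gives a generator $h_0$, meaning $F_{\mathbb{R}}=\{R\circ h_0: R\in\textrm{Rat}_{\mathbb{R}}\}$. Here I would check that Lüroth applies to a subfield of $\mathbb{C}(z)$ viewed as an extension of $\mathbb{R}$. Because $F_{\mathbb{R}}$ has transcendence degree $1$ over $\mathbb{R}$ and is nontrivial, it contains nonconstant elements such as $c\circ r$. Hence it embeds in a function field of a real curve, and Lüroth's argument carries over (Lüroth's theorem holds over any field). Set $g:=c^{-1}\circ h_0$. Then $g^{-1}(\mathbb{T})\supset L$. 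For any $s$ with $s^{-1}(\mathbb{T})\supset L$ we get $c\circ s=R\circ h_0$ with $R$ real, so
\[
s=(c^{-1}\circ R\circ c)\circ g=:Q\circ g .
\]
Since $R$ has real coefficients, $R(\hat{\mathbb{R}})\subset\hat{\mathbb{R}}$, so $Q(\mathbb{T})\subset\mathbb{T}$, that is, $Q\in\textrm{QFBP}$. Conversely, every $Q\circ g$ with $Q\in$ QFBP maps $L$ into $\mathbb{T}$. The main obstacle I expect is justifying the real form of Lüroth for subfields of $\mathbb{C}(z)$ over $\mathbb{R}$. I would handle it by invoking the general statement that any intermediate field $\mathbb{R}\subsetneq F\subseteq \mathbb{C}(z)$ has the form $\mathbb{R}(h_0)$, which is the version in Theorem \ref{lurothclassical} with $\mathbb{K}=\mathbb{R}$, interpreting $\textrm{Rat}_{\mathbb{K}}$ appropriately.
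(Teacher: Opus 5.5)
\textbf{Verdict: genuine gap, in the Lüroth step.} The rest of your setup is sound, and it is the alternative route the paper mentions in the remark after its proof. The set $F_{\mathbb{R}}$ of rational functions with values in $\hat{\mathbb{R}}$ on $L$ is a field containing $\mathbb{R}$. The Cayley transform turns $s^{-1}(\mathbb{T})\supset L$ into $c\circ s\in F_{\mathbb{R}}$. Once $F_{\mathbb{R}}=\mathbb{R}(h_0)$ is known, your last paragraph is correct: $g=c^{-1}\circ h_0$, and $Q=c^{-1}\circ R\circ c\in\mathrm{QFBP}$.

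The gap is exactly the step you flagged as the main obstacle. Theorem \ref{lurothclassical} with $\mathbb{K}=\mathbb{R}$ concerns subfields of $\mathrm{Rat}_{\mathbb{R}}=\mathbb{R}(z)$, and $F_{\mathbb{R}}$ is not contained in $\mathbb{R}(z)$. Saying ``Lüroth holds over any field'' does not help, because the ambient field $\mathbb{C}(z)$ is not purely transcendental over $\mathbb{R}$. The general statement you invoke, that every field $\mathbb{R}\subsetneq F\subseteq\mathbb{C}(z)$ equals $\mathbb{R}(h_0)$, is false:
\begin{itemize}
\item Take $F=\mathbb{C}(z)$. It contains $i$, which is algebraic over $\mathbb{R}$, whereas $\mathbb{R}$ is algebraically closed in $\mathbb{R}(h_0)$ whenever $h_0$ is transcendental.
\item Excluding $i$ is not enough either. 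Put $x=\tfrac{i}{2}(z+1/z)$ and $y=\tfrac12(z-1/z)$. Both are fixed by the automorphism $f\mapsto\overline{f(-1/\bar z)}$, which sends $i$ to $-i$, so $i\notin\mathbb{R}(x,y)$. Yet $x^2+y^2=-1$, so $-1$ is a sum of squares in $\mathbb{R}(x,y)$. That never happens in $\mathbb{R}(h_0)\cong\mathbb{R}(t)$: evaluate at a real point avoiding poles.
\end{itemize}
So a specific property of $F_{\mathbb{R}}$ has to be used, and your proposal never identifies one. The Zariski-closure remarks do not supply it, and they are not used anywhere.

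\textbf{What is missing: $F_{\mathbb{R}}$ is a real field.} Its elements are real at all but finitely many points of $L$, so $i\notin F_{\mathbb{R}}$ and $-1$ is not a sum of squares in it. With this you can finish algebraically:
\begin{itemize}
\item $F_{\mathbb{R}}(i)\subseteq\mathbb{C}(z)$ contains $\mathbb{C}$ and the nonconstant $c\circ r$, so Lüroth over $\mathbb{C}$ gives $F_{\mathbb{R}}(i)=\mathbb{C}(u)$.
\item Hence $F_{\mathbb{R}}$ is the function field of a real conic.
\item Any point of $L$ gives an $\mathbb{R}$-rational place of $F_{\mathbb{R}}$. So the conic has real points, and therefore $F_{\mathbb{R}}=\mathbb{R}(h_0)$.
\end{itemize}
This is the content of the real Lüroth theorem that the paper alludes to.

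\textbf{The paper's route.} The paper avoids real Lüroth altogether. It takes a Riemann map $\phi$ of the upper half-plane onto a complementary component of $L$. By Schwarz reflection, $h\mapsto h\circ\phi$ carries the functions real on $L$ onto an honest subfield $\mathcal{R}\subseteq\mathrm{Rat}_{\mathbb{R}}$. Theorem \ref{lurothclassical} with $\mathbb{K}=\mathbb{R}$ then applies verbatim, and the generator is transported back to give $g$. This embedding into $\mathbb{R}(t)$ is precisely what certifies the reality that your argument assumes without proof.

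\textbf{Cleanup.} Delete the unfinished displays defining $\Sigma$ and $\mathbb{C}(r)\cap\{\ldots\}$, and the incoherent ``vacuous'' sentence. In the paper's setting $L$ is a lemniscate, so a nonconstant $s$ with $s^{-1}(\mathbb{T})\supset L$ exists.
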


\begin{proof}  Let $\phi$ be a Riemann map of the upper half-plane onto the bounded component of $\Chat\setminus L$, we have that L\"uroth's Theorem with $\mathbb{K}=\mathbb{R}$ applies to the subfield
\begin{equation}\label{applyinglurothtothis} \mathcal{R}:=\{ R\in\textrm{Rat}_{\mathbb{R}} :  R\circ\phi^{-1}\in\textrm{Rat}_{\mathbb{C}} \}.  \end{equation}
of $\textrm{Rat}_{\mathbb{R}}$ and hence is generated by a single element $f\in\mathcal{R}$. 

Let $M$ be a M\"obius transformation satisfying $M(\mathbb{T})=\widehat{\mathbb{R}}$. If $s \in \textrm{Rat}$ satisfies $s^{-1}(\mathbb{T})\supset L$, then $M\circ s\circ\phi\in \mathcal{R}$ so that 
\[ M\circ s\circ\phi = R\circ f \] 
for some real rational $R$, or equivalently
\[ s=M^{-1}\circ R\circ M\circ M^{-1}\circ f\circ \phi^{-1}. \] 
Note that $M^{-1}\circ \textrm{Rat}_{\mathbb{R}}\circ M$ coincides with the collection $\textrm{QFBP}$, so $M^{-1}\circ R\circ M\in\textrm{QFBP}$. Moreover, $f\circ \phi^{-1}\in\textrm{Rat}$ by (\ref{applyinglurothtothis}), and $L\subset (M^{-1}\circ f\circ \phi^{-1})^{-1}(\mathbb{T})$. Thus
\[ g:=M^{-1}\circ f\circ \phi^{-1} \]
satisfies the conclusion of the Theorem.
\end{proof}

\begin{rem} We remark that the field of rational functions $\mathcal{S}$ which are real on a given lemniscate $L$ is a subfield of $\textrm{Rat}_\mathbb{C}$ but $\mathbb{C}\not\subseteq \mathcal{S}$, so L\"uroth's theorem with $\mathbb{K}=\mathbb{C}$ does not apply to $\mathcal{S}$. Neither is $\mathcal{S}$ contained in $\textrm{Rat}_\mathbb{R}$, so L\"uroth's theorem with $\mathbb{K}=\mathbb{R}$ also does not apply. This explains the need for the argument in the proof of Theorem \ref{SS_thm} transforming $\mathcal{S}$ into a subfield of $\textrm{Rat}_\mathbb{R}$.
\end{rem}

\begin{rem} A different proof of Theorem \ref{SS_thm} may be given by applying a real-variant of L\"uroth's Theorem (see the main result of \cite{MR1485305}) directly to $\mathcal{S}$. We also remark that Theorem \ref{SS_thm} was improved upon considerably in the recent work \cite{MR4930120}, where it is shown that in fact rational functions are determined up to $\textrm{QFBP}$ by only finitely many points on their lemniscates. 
\end{rem}

\noindent Recall that $r$, $r^*$ denote rational functions obtained by welding $A$, $B$ and $A^*$, $B^*$ as in Theorem \ref{maliksthm}, and that $r$, $r^*$ share a lemniscate $L$.

\begin{cor}\label{blaschkefactoredout} There exists a rational function $s$ and Blaschke products $C$, $C^*$ so that $r=C\circ s$ and $r^*=C^*\circ s$. 
\end{cor}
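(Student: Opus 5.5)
Apply Theorem \ref{SS_thm} to the common lemniscate $L$ of $r$ and $r^*$. This produces a rational function $g$ with $g^{-1}(\mathbb{T})\supset L$. Since $r^{-1}(\mathbb{T})=L$ and $(r^*)^{-1}(\mathbb{T})=L$, both functions lie in the left-hand side of (\ref{ssthmcond}). Hence there are $Q,Q^*\in\textrm{QFBP}$ with $r=Q\circ g$ and $r^*=Q^*\circ g$. It then remains to normalize $g$ so that the outer factors become genuine Blaschke products. We will take $s$ to be $g$, possibly post-composed with a M\"obius map that preserves $\mathbb{T}$.

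\textbf{Step 1: $g^{-1}(\mathbb{T})$ equals $L$.} From $r=Q\circ g$ we get $g^{-1}(\mathbb{T})\subset g^{-1}(Q^{-1}(\mathbb{T}))=r^{-1}(\mathbb{T})=L$, using $Q(\mathbb{T})\subset\mathbb{T}$. Combined with $g^{-1}(\mathbb{T})\supset L$, this gives $g^{-1}(\mathbb{T})=L$, a Jordan curve. Its complement has two components $\Omega_{\textrm{in}}\ni0$ and $\Omega_{\textrm{out}}\ni\infty$, following Notation \ref{phi_notation} for $r$. By connectedness, $g(\Omega_{\textrm{in}})$ lies entirely in $\mathbb{D}$ or entirely in $\mathbb{D}^*$. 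If it lies in $\mathbb{D}^*$, replace $g$ by $1/g$ and $Q$ by $Q(1/\cdot)$, which keeps $Q\in\textrm{QFBP}$. So we may assume $g(\Omega_{\textrm{in}})\subset\mathbb{D}$. Since $g$ maps the bounded region $\Omega_{\textrm{in}}$ properly onto its image with boundary going to $\mathbb{T}$, we get $g(\Omega_{\textrm{in}})=\mathbb{D}$ and $g(\Omega_{\textrm{out}})=\mathbb{D}^*$. Set $s:=g$.

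\textbf{Step 2: the outer factors are Blaschke products.} Take $z\in\mathbb{D}$ and choose $w\in\Omega_{\textrm{in}}$ with $s(w)=z$, which exists by surjectivity. Then $Q(z)=r(w)\in\mathbb{D}$, because $r(\Omega_{\textrm{in}})=\mathbb{D}$ by Theorem \ref{maliksthm}: $r\circ\phi_{\textrm{in}}=A$ and $A$ maps $\mathbb{D}$ onto $\mathbb{D}$. So $Q$ is a rational function with $Q(\mathbb{T})\subset\mathbb{T}$ and $Q(\mathbb{D})\subset\mathbb{D}$. Such a map is proper from $\mathbb{D}$ onto $\mathbb{D}$, i.e.\ a Blaschke product, which we call $C$. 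The same argument applied to $r^*$ shows $Q^*=:C^*$ is a Blaschke product. Here we use that $r^*$ also maps the component containing $0$ into $\mathbb{D}$, since $r^*\circ\phi^*_{\textrm{in}}=A^*$.

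\textbf{Main obstacle.} The delicate point is the orientation bookkeeping. We need the normalization of $s$ from Step 1 to serve $r$ and $r^*$ simultaneously, that is, both must send the same complementary component of $L$ into $\mathbb{D}$. This holds because both welded functions come from Theorem \ref{maliksthm} with the same uniformization: the disc side $\overline{\mathbb{D}}$ of the surface $S$, carrying $A$ or $A^*$, corresponds to the component containing $0$ in both cases. Once that is confirmed, only one normalization of $g$ is needed, and everything else is formal.
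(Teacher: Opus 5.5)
Your proof is correct, and it differs from the paper's in the key step. The paper keeps $s=g$ and shows $C^{-1}(\mathbb{T})=\mathbb{T}$ by contradiction: otherwise $r^{-1}(\mathbb{T})=s^{-1}(C^{-1}(\mathbb{T}))$ would strictly contain $L$. It concludes that $C$ or $1/C$ is a Blaschke product, passes to $1/s$ in the latter case, and treats $C^*$ by ``the same argument.'' You instead get $g^{-1}(\mathbb{T})=L$ directly from $r=Q\circ g$ (the paper records this only after the corollary). You then normalize $g$ once so that $g(\Omega_{\textrm{in}})=\mathbb{D}$, and read off $Q(\mathbb{D})\subset\mathbb{D}$ from $r(\Omega_{\textrm{in}})=\mathbb{D}$.

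The paper's route is shorter and uses only the equality of lemniscates. That information alone cannot suffice, though: $r$ and $1/r$ share a lemniscate, yet no single $s$ gives $r=C\circ s$ and $1/r=C^*\circ s$ with $C$, $C^*$ both Blaschke. So the compatibility of the two normalizations, which the paper leaves implicit, really does require your observation that $\{z:|r(z)|<1\}=\{z:|r^*(z)|<1\}=\Omega_{\textrm{in}}$. This is immediate from Notation \ref{phi_notation}, since $\phi_{\textrm{in}}$ and $\phi^*_{\textrm{in}}$ both fix $0$.

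Your normalization is also exactly what makes $A_1=s\circ\phi_{\textrm{in}}$ and $B_1=s\circ\phi_{\textrm{out}}$ Blaschke products in the next step. For $g(\Omega_{\textrm{out}})=\mathbb{D}^*$ you need one more line: $g$ is onto $\Chat$ and $g(\Omega_{\textrm{in}})\subset\mathbb{D}$, so $g(\Omega_{\textrm{out}})$ meets, hence lies in, $\mathbb{D}^*$.

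Finally, replacing $Q$ by $Q(1/\cdot)$ inside $\textrm{QFBP}$ sidesteps the paper's remark that Blaschke products commute with $z\mapsto 1/z$. That remark is true for $z\mapsto 1/\bar z$ but not in general for $z\mapsto 1/z$.
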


\begin{proof} By Theorem \ref{SS_thm}, there exists $C \in \textrm{QFBP}$ so that $r=C\circ s$. If $1/C$ is a Blaschke product, then $r=C\circ s=1/C\circ 1/s$ (since Blaschke products commute with $z\mapsto 1/z$). Thus the conclusion of the Corollary will follow if we can show that either $C$ or $1/C$ is a Blaschke product. Equivalently, we need to show $C^{-1}(\mathbb{T})=\mathbb{T}$. Suppose by way of contradiction $C^{-1}(\mathbb{T})\not=\mathbb{T}$, or equivalently
\begin{equation}\label{SWOC} \mathbb{T} \subsetneq C^{-1}(\mathbb{T}).\end{equation}
Note that 
\begin{equation}\label{whence} r^{-1}(\mathbb{T})=(C\circ s)^{-1}(\mathbb{T})= s^{-1}(C^{-1}(\mathbb{T})). \end{equation}
  Since $L\subseteq s^{-1}(\mathbb{T})$, it follows from (\ref{SWOC}) that $L \subsetneq s^{-1}(C^{-1}(\mathbb{T}))$, whence from (\ref{whence}) we see that $L\subsetneq r^{-1}(\mathbb{T})$ contradicting that $L=r^{-1}(\mathbb{T})$.
Thus $C$ is a Blaschke product, and the same argument applies to $C^*$. 
\end{proof}

Since $C^{-1}(\mathbb{T})=\mathbb{T}$, we see that $s$ also shares the same lemniscate $L$ with $r$, $r^*$, and so recalling $\phi_{\textrm{in}}$, $\phi_{\textrm{out}}$ from Notation \ref{phi_notation}, we have that 
\begin{gather}\label{a1b1defn} \nonumber A_1:=s\circ\phi_{\textrm{in}}, \\ \nonumber B_1:=s\circ\phi_{\textrm{out}},
\end{gather}
 are Blaschke products. We are now able to finish the proof of Theorem \ref{uniquenessconcmalik}. 

\begin{prop} The three pairs $A$, $B$ and $A^*$, $B^*$ and $A_1$, $B_1$ satisfy
\begin{equation}\label{matrix_versionagain} \begin{bmatrix} A & B \\ A^* & B^* \end{bmatrix} = \begin{bmatrix}  C \circ A_1 \hspace{5mm}  C \circ B_1 \\
 C^* \circ A_1 \hspace{4mm} C^* \circ B_1 \end{bmatrix}.
\end{equation}
\end{prop}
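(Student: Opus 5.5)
The identities follow by unwinding definitions. The one real point is that both pairs are welded along the same maps $\phi_{\textrm{in}}$, $\phi_{\textrm{out}}$.

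By Theorem \ref{maliksthm}, $r\circ\phi_{\textrm{in}}=A$ and $r\circ\phi_{\textrm{out}}=B$. Likewise $r^*\circ\phi^*_{\textrm{in}}=A^*$ and $r^*\circ\phi^*_{\textrm{out}}=B^*$, where $\phi^*_{\textrm{in}}$, $\phi^*_{\textrm{out}}$ are the Riemann maps for the lemniscate of $r^*$. The last assertion of Theorem \ref{maliksthm} says this lemniscate is the same curve $L$, since it depends only on $k$. With the normalization of Notation \ref{phi_notation} (fixing $1$ and $0$, resp. $\infty$), the Riemann maps are then literally the same maps. In the proof of Theorem \ref{maliksthm}, both are the restrictions to $\overline{\mathbb{D}}$ and $\overline{\mathbb{D}^*}$ of the inverse of the normalized uniformization of $S$, and $S$ depends only on $\sim_k$. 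Hence $\phi^*_{\textrm{in}}=\phi_{\textrm{in}}$ and $\phi^*_{\textrm{out}}=\phi_{\textrm{out}}$.

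Next we verify that these maps are also the ones attached to $s$. We need the bounded and unbounded complementary components of $L$ to be exactly $\{|s|<1\}$ and $\{|s|>1\}$, in the same order as for $r$. The text before the proposition already gives $s^{-1}(\mathbb{T})=L$, because $C^{-1}(\mathbb{T})=\mathbb{T}$. Since $C$ is a Blaschke product, $|C(w)|<1$ exactly when $|w|<1$. So $\{|r|<1\}=s^{-1}(\{|C|<1\})=\{|s|<1\}$, and similarly for $>1$. The same computation with $C^*$ shows this region is also $\{|r^*|<1\}$. Thus $A_1=s\circ\phi_{\textrm{in}}$ maps $\mathbb{D}$ properly onto $\mathbb{D}$, and $B_1=s\circ\phi_{\textrm{out}}$ maps $\mathbb{D}^*$ properly onto $\mathbb{D}^*$. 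By Schwarz reflection, $A_1$ and $B_1$ are Blaschke products (this is the assertion preceding the proposition).

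We then compute directly:
\begin{equation*}
C\circ A_1=C\circ s\circ\phi_{\textrm{in}}=r\circ\phi_{\textrm{in}}=A,\qquad C\circ B_1=r\circ\phi_{\textrm{out}}=B.
\end{equation*}
Likewise $C^*\circ A_1=r^*\circ\phi_{\textrm{in}}=A^*$ and $C^*\circ B_1=r^*\circ\phi_{\textrm{out}}=B^*$, which gives (\ref{matrix_versionagain}).

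The main obstacle is a subtlety in Corollary \ref{blaschkefactoredout}, which may have replaced $s$ by $1/s$ and $C$ by $1/C$ independently for $r$ and for $r^*$. The same $s$ must serve both. Since both factorizations come from the single $g$ of Theorem \ref{SS_thm}, we fix $s$ so that $\{|s|<1\}$ is the bounded component of $\Chat\setminus L$, which is possible by swapping $s\leftrightarrow 1/s$ if needed. The inside/outside comparison above then forces $C$ and $C^*$ to be genuine Blaschke products, rather than reciprocals of them, for this common $s$.
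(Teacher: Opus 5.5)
Your proposal is correct and is the paper's argument. The paper's proof is exactly the one-line computation $A=r\circ\phi_{\textrm{in}}=C\circ s\circ\phi_{\textrm{in}}=C\circ A_1$, followed by ``similarly'' for $B$, $A^*$, $B^*$. You also check points the paper leaves implicit, and those checks are sound: that $\phi_{\textrm{in}}$, $\phi_{\textrm{out}}$ are literally the same maps for both pairs, since they come from the normalized uniformization of $S$; and that $s$ must be chosen (replacing it by $1/s$ if needed) so that $\{|s|<1\}=\{|r|<1\}=\{|r^*|<1\}$, which makes a single $s$ serve both $r$ and $r^*$, with $C$, $C^*$, $A_1$, $B_1$ genuine Blaschke products rather than reciprocals.
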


\begin{proof} Since  $A=r\circ\phi_{\textrm{in}}$ by Theorem \ref{maliksthm} and $r=C\circ s$ by Corollary \ref{blaschkefactoredout}, we have 
\[ A=r\circ\phi_{\textrm{in}}=C\circ s\circ\phi_{\textrm{in}} =: C\circ A_1 \] 
and similarly for $B$, $A^*$ and $B^*$.
\end{proof}

\noindent We conclude this Section with two remarks.

\begin{rem} There is an intriguing analogy to be made between the question of to what extent a rational function is determined by its lemniscate (discussed in this Section) and to what extent a rational function is determined by its Julia set. The latter question (and variants thereof) are well-studied: see for instance \cite{MR1376996},  \cite{MR3276598}, \cite{MR3539379}, \cite{MR3816522} and \cite{MR4181717}, although there still remain open questions. L\"uroth's Theorem and its variants do not seem to be applicable in the dynamical setting, although on the other hand the non-dynamical welding of Blaschke products appearing in Theorem \ref{maliksthm} has a well-known dynamical analogue \cite{mcmullen1985simultaneous} (see also Section 7.4 of \cite{MR3445628}). 
\end{rem}

\begin{rem} We also remark that while we have described non-uniqueness for solutions to $A\circ k=B$ in Theorem \ref{uniquenessconcmalik}, the corresponding question of describing non-uniqueness of solutions to the welding equation $\phi^{\gamma}\circ k = \psi^{\gamma}$ of Definition \ref{weldingeqtncurves} is still open. For quasisymmetric $k$, existence and uniqueness is understood (see \cite{MR1966191} and \cite{MR3272752} as well as \cite{MR2892610} for a random analogue), but for less regular $k$ the situation is not well-understood as $\phi^{\gamma}\circ k = \psi^{\gamma}$ can admit solutions for quite irregular $k$ (see \cite{MR2373370}), and for such $k$ uniqueness also fails quite dramatically: see \cite{MR3753187}, \cite{rodriguez2025circlehomeomorphismcompositionweldings} and \cite{rodriguez2026flexiblecurveshausdorffdimension}. 
\end{rem}

\section{Proof of Theorem \ref{mon_thm}.}\label{mainsection}

%In this Section, we will focus largely on the proof of the $\impliedby$ direction in Theorem \ref{mon_thm}; this is the more interesting implication in view of the Introduction. The proof of the $\implies$ direction of Theorem \ref{mon_thm} is straightforward; see Remark \ref{} at the end of this Section for details. 

Recall Remark \ref{mon_group_defn} and Definition \ref{pre_reltn_defn} from the Introduction. We will denote by $\mathcal{P}$ the partition induced by the equivalence relation $\sim$ of Definition \ref{pre_reltn_defn}. We now begin proving that the monodromy groups of $A$ and $B$ both respect $\mathcal{P}$, in the following sense.

\begin{definition}\label{respectdefn} The monodromy group of $A$ is said to \emph{respect} a partition of $\{1, ..., d\}$ if for every monodromy group element $g$ and partition element $P$, there exists a partition element $P'$ so that $gP:=\{gp: p\in P\}\subseteq P'$.
\end{definition} 

\noindent Partitions respected by a monodromy group are called \emph{systems of imprimitivity} (see \cite{MR2557404} for an overview and \cite{MR2774592} for work related to Theorem \ref{mon_thm}).

 \begin{lem}\label{case0lem} Suppose that $j\simeq k$ and $g$ is a loop in $\mathbb{D}$. Then $g_B^*(j)\simeq g_A^*(k)$. 
  \end{lem}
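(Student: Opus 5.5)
The plan is to build the required witnessing loop directly, by concatenating the loop that witnesses $j\simeq k$ with the given loop $g$. I read the hypothesis $j\simeq k$ in the oriented sense of Definition \ref{pre_reltn_defn}: there is a loop $\gamma$ based at $\zeta$, avoiding the critical values of $A$ and $B$, and an index $i$ with $\gamma_B^*(i)=j$ and $\gamma_A^*(i)=k$. I will make this convention explicit in the write-up, since it is the reading under which the conclusion $g_B^*(j)\simeq g_A^*(k)$ is the natural one. Throughout, $g$ is likewise a loop based at $\zeta$ avoiding the critical values of both maps.

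\textbf{Step 1: the concatenated loop.} Form the loop $\gamma\cdot g$ that first traverses $\gamma$ and then $g$. It is again based at $\zeta$ and avoids the critical values of $A$ and of $B$, so it induces permutations $(\gamma\cdot g)_A^*$ and $(\gamma\cdot g)_B^*$ as in Remark \ref{mon_group_defn}.

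\textbf{Step 2: lifts compose.} Path lifting is unique, so the lift of $\gamma\cdot g$ under $B$ starting at $b_i$ is the lift of $\gamma$ from $b_i$ followed by the lift of $g$ from its endpoint. The lift of $\gamma$ from $b_i$ ends at $b_j$. The lift of $g$ from $b_j$ ends at $b_{g_B^*(j)}$. Hence
\[(\gamma\cdot g)_B^*(i)=g_B^*(\gamma_B^*(i))=g_B^*(j).\]
The identical argument for $A$, starting at $a_i$, gives $(\gamma\cdot g)_A^*(i)=g_A^*(k)$.

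\textbf{Step 3: conclusion.} By Definition \ref{pre_reltn_defn}, the loop $\gamma\cdot g$ together with the index $i$ witnesses $g_B^*(j)\simeq g_A^*(k)$.

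The only point requiring care, and the one I regard as the main obstacle, is bookkeeping rather than analysis. First, the composition order in Step 2 must match the convention of Remark \ref{mon_group_defn}, so that $g$ is applied after $\gamma$. Second, the oriented reading of $j\simeq k$ must be used, with the first index coming from $B$ and the second from $A$. With the reverse reading, the same concatenation instead yields $g_B^*(k)\simeq g_A^*(j)$. That variant is the form needed for the symmetric case when this lemma is later used to show $\mathcal{P}$ is respected, and I would record it as well.
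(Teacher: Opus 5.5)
Your proof is correct and is the paper's argument: concatenate the loop $\gamma$ witnessing $j\simeq k$ with $g$ (first $\gamma$, then $g$), and use uniqueness of lifts to read off $(\gamma\cdot g)_B^*(i)=g_B^*(j)$ and $(\gamma\cdot g)_A^*(i)=g_A^*(k)$. The paper also silently takes the oriented reading of $j\simeq k$, so your explicit remark about orientation, together with the reversed variant $g_B^*(k)\simeq g_A^*(j)$ (which is what the later partition argument needs in the symmetric case), is a useful clarification rather than a departure.
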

 
 \begin{proof} Since $j\simeq k$ there is a loop $\gamma\subset\mathbb{D}$ and $i$ so that $\gamma_B^*(i)=j$, $\gamma_A^*(i)=k$. Consider the loop $g\gamma$ (in other words follow the loop $\gamma$ and then $g$). Then 
\[ (g\gamma)_B^*(i)=g_B^*(\gamma_B^*(i))=g_B^*(j), \] 
and
\[ (g\gamma)_A^*(i)=g_A^*(\gamma_A^*(i))=g_A^*(k), \] 
so that $g_B^*(j)\simeq g_A^*(k)$ by definition of $\simeq$.
 \end{proof}

% Suppose $b_j$, $b_k$ lie in the same partition. Then there is a loop $g$ and an arc $\delta$ of $C^{-1}(g)$ so that $\delta$ begins at say $c_i$, and terminates at $c_j$ but analytic continuation of $(f_0, c_i)$ along $\delta$ yields a branch $(f_n, c_j)$ with $f_n(c_j)=b_k$ (really there could be a chain of such paths I think.) We need to show $g_B^*(j) \sim g_B^*(k)$ and $g_C^*(j) \sim g_C^*(k)$.
 
 \begin{prop}\label{case1prop} Suppose that $g$ is a loop in $\mathbb{D}$ so that $g_A^*=g_B^*$, and assume $j\simeq k$. Then $g_A^*(j)\simeq g_A^*(k)$ and hence also $g_B^*(j)\simeq g_B^*(k)$. 
 \end{prop}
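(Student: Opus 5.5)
The plan is to reduce the statement directly to Lemma \ref{case0lem}, using the hypothesis $g_A^*=g_B^*$ to swap the subscripts. No new analytic input is needed: everything happens at the level of the permutations $g_A^*$, $g_B^*\in S_d$.

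\textbf{Step 1.} Since $j\simeq k$ and $g$ is a loop in $\mathbb{D}$ (based at $\zeta$ and avoiding the critical values of $A$ and $B$), Lemma \ref{case0lem} applies. It gives
\[ g_B^*(j)\simeq g_A^*(k). \]

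\textbf{Step 2.} By hypothesis $g_B^*=g_A^*$ as elements of $S_d$, so $g_B^*(j)=g_A^*(j)$. Substituting into Step 1 gives $g_A^*(j)\simeq g_A^*(k)$, which is the first assertion.

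\textbf{Step 3.} Again by $g_A^*=g_B^*$, we have $g_A^*(j)=g_B^*(j)$ and $g_A^*(k)=g_B^*(k)$. Rewriting both sides of Step 2 gives $g_B^*(j)\simeq g_B^*(k)$.

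One point deserves care: $\simeq$ is defined through an ordered pair (a $B$-image and an $A$-image of a common index $i$). However, Definition \ref{pre_reltn_defn} explicitly symmetrizes it, since both $j\simeq k$ and $k\simeq j$ are declared. So the orientation in which Lemma \ref{case0lem} is applied does not matter.

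I expect no real obstacle; the only thing to check is this symmetry, which is built into the definition.
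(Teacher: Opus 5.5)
Your proposal is correct and is essentially the paper's proof: apply Lemma \ref{case0lem} to get $g_B^*(j)\simeq g_A^*(k)$, then use $g_A^*=g_B^*$ to swap subscripts and obtain both conclusions. Your remark on the symmetry of $\simeq$ is a small extra point the paper does not spell out. The proof of Lemma \ref{case0lem} implicitly uses the orientation $\gamma_B^*(i)=j$, $\gamma_A^*(i)=k$, and with the reversed orientation one gets $g_A^*(k)\simeq g_A^*(j)$, which is the same relation by symmetry.
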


\begin{proof} By Lemma \ref{case0lem}, we have that $g_B^*(j)\simeq g_A^*(k)$. Since $g_B^*=g_A^*$ by assumption, we conclude that $g_A^*(j)\simeq g_A^*(k)$ and $g_B^*(j)\simeq g_B^*(k)$, as needed.
\end{proof}

%\begin{rem} In the previous paragraph we showed an explicit arc $\delta\subset C^{-1}(g\gamma)$ so that analytic continuation of $f$ along $\delta$ led to $D(g_C^*(b_j))=D(g_C^*(b_k))$.As a different approach to potentially keep in mind I can try and define the partition to be $b_j\sim b_k$ iff $D$ must identify $b_j$, $b_k$ in order for $D\circ f$ to be a Blaschke product. \end{rem}
 
%Now let us consider the case that $g$ induces different monodromy group elements $g_B$ for $B$ as opposed to $g_C$ for $C$. Let us first prove that $g_C$ satisfies $g_C(P)\subseteq \tilde{P}$. Again assume $j\sim k$ so that there is an arc $\delta$ in $C^{-1}(g)$ starting at $c_i$ and terminating at $c_j$ but analytic continuation of $(f_0, c_i)$ along $\delta$ yields a branch $(f_n, c_j)$ with $f_n(c_j)=b_k$ (really there could be a chain of such paths I think.) 

 \begin{prop}\label{case2prop} Suppose that $g$ is a loop in $\mathbb{D}$ so that $g_A^*\not=g_B^*$, and assume $j\simeq k$. Then $g_B^*(j)\sim g_B^*(k)$ and $g_A^*(j)\sim g_A^*(k)$.
 \end{prop}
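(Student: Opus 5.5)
The plan is to prove both conclusions with chains of length two, using only Lemma \ref{case0lem} and a diagonal observation. The hypothesis $g_A^*\neq g_B^*$ plays no role; the argument works for every loop $g$.

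\textbf{Step 1 (the diagonal is $\simeq$-related).} For every $m\in\{1,\dots,d\}$ we have $m\simeq m$ in the ordered sense required by Lemma \ref{case0lem}. Take $\gamma$ to be the constant loop at $\zeta$. It avoids the critical values of $A$ and $B$, because $\zeta$ is assumed not to be one. It induces the identity permutation for both maps, so $\gamma_B^*(m)=m$ and $\gamma_A^*(m)=m$, which is exactly the witness Definition \ref{pre_reltn_defn} requires. If one prefers to avoid constant loops, a loop $\gamma\gamma^{-1}$ gives the same result.

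\textbf{Step 2 (apply Lemma \ref{case0lem} twice).} The hypothesis $j\simeq k$ means there are $\gamma$ and $i$ with $\gamma_B^*(i)=j$ and $\gamma_A^*(i)=k$. This is the ordered form Lemma \ref{case0lem} needs, and it gives
\[ g_B^*(j)\simeq g_A^*(k). \]
Applying Lemma \ref{case0lem} to the diagonal relation $m\simeq m$ from Step 1 gives
\[ g_B^*(m)\simeq g_A^*(m) \quad\text{for every } m. \]
In particular this holds for $m=j$ and for $m=k$.

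\textbf{Step 3 (build the chains).} Since $\simeq$ is symmetric by definition, we get the chains
\[ g_B^*(j)\simeq g_A^*(k)\simeq g_B^*(k), \qquad g_A^*(j)\simeq g_B^*(j)\simeq g_A^*(k). \]
Hence $g_B^*(j)\sim g_B^*(k)$ and $g_A^*(j)\sim g_A^*(k)$, which is the claim.

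The only point needing care is the asymmetry in Lemma \ref{case0lem}. Its proof uses the ordered witness ($B$ in the first slot, $A$ in the second), while $\simeq$ was declared symmetric afterwards. So the lemma may only be applied to the ordered witnesses from the hypothesis and from the constant loop. Symmetry of $\simeq$ is then used only when assembling the chains, which is legitimate because $\sim$ is generated by the symmetric relation.
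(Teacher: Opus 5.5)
Your proof is correct and is essentially the paper's argument: both get $g_B^*(j)\simeq g_A^*(k)$ from Lemma \ref{case0lem} and close the chains with $g_B^*(m)\simeq g_A^*(m)$. The paper reads that diagonal relation directly off Definition \ref{pre_reltn_defn} with $\gamma=g$, $i=m$ (its case split on equality is unnecessary), whereas you route it through the constant loop and the lemma, which is the same thing in disguise. As you note, the hypothesis $g_A^*\neq g_B^*$ is never used; also, if the witness for $j\simeq k$ comes in the reversed order, you may simply swap $j$ and $k$, since the conclusion is symmetric in them.
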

 
 \begin{proof} By Lemma \ref{case0lem}, we have that $g_B^*(j)\simeq g_A^*(k)$. On the other hand, $g_B^*(k)\sim g_A^*(k)$ since either  $g_B^*(k)=g_A^*(k)$ (in which case $g_B^*(k)\sim g_A^*(k)$ by reflexivity) or else $g_B^*(k)\not=g_A^*(k)$ in which case $g_B^*(k)\simeq g_A^*(k)$ by definition of $\simeq$. Thus we have $g_B^*(j)\simeq g_A^*(k)$ and $g_B^*(k)\sim g_A^*(k)$ so that $g_B^*(j)\sim g_B^*(k)$ (by transitivity).
 
 The proof that $g_A^*(j)\sim g_A^*(k)$ is similar: $g_B^*(j)\sim g_A^*(j)$ since either  $g_B^*(j)=g_A^*(j)$ (in which case $g_B^*(j)\sim g_A^*(j)$ by reflexivity) or else $g_B^*(j)\not=g_A^*(j)$ in which case $g_B^*(j)\simeq g_A^*(j)$ by definition of $\simeq$. Combining this with $g_B^*(j)\simeq g_A^*(k)$ yields $g_A^*(j)\sim g_A^*(k)$.
 \end{proof}
 
\noindent We may now conclude the following. 

\begin{cor} The monodromy groups of $A$ and $B$ both respect $\mathcal{P}$. 
\end{cor}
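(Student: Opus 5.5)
We must show: for each loop $g$ and each class $P$ of $\sim$, both $g_A^*(P)$ and $g_B^*(P)$ lie in a single class $P'$. Since $\sim$ is generated by $\simeq$, it suffices to show that $x\sim y$ implies $g_A^*(x)\sim g_A^*(y)$ and $g_B^*(x)\sim g_B^*(y)$. The images of a class are then pairwise equivalent, hence contained in one class.

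The argument is an induction along chains. Suppose $x\sim y$ via a chain $x=x_0\simeq x_1\simeq\cdots\simeq x_n=y$ as in (\ref{chain_defn}). If $n=0$ there is nothing to prove. Otherwise, consider each link $x_{m}\simeq x_{m+1}$ and split on whether $g_A^*=g_B^*$:
\begin{itemize}
\item If $g_A^*=g_B^*$, Proposition \ref{case1prop} gives $g_A^*(x_m)\simeq g_A^*(x_{m+1})$, hence $\sim$, and likewise for $g_B^*$.
\item If $g_A^*\neq g_B^*$, Proposition \ref{case2prop} gives $g_B^*(x_m)\sim g_B^*(x_{m+1})$ and $g_A^*(x_m)\sim g_A^*(x_{m+1})$ directly.
\end{itemize}
In either case every link is preserved up to $\sim$. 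Transitivity of $\sim$ then yields $g_A^*(x)\sim g_A^*(y)$ and $g_B^*(x)\sim g_B^*(y)$.

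Two small points need care. First, $\simeq$ is symmetric by definition, since both $j\simeq k$ and $k\simeq j$ are set. So a chain link may be used in either direction, and the Propositions apply whichever orientation the link $x_m\simeq x_{m+1}$ was recorded in. Second, a chain may contain reflexive steps $x_m=x_{m+1}$, which are trivially preserved.

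The main obstacle is really only bookkeeping: Proposition \ref{case2prop} is stated only for the relation $\simeq$ and yields $\sim$ rather than $\simeq$. This is exactly why we must pass to the generated equivalence relation and cannot work with $\simeq$ alone. With that noted, the corollary follows immediately from Definition \ref{respectdefn}, taking $P'$ to be the class of $g_A^*(p)$ (respectively $g_B^*(p)$) for any $p\in P$.
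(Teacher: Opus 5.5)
Your proposal is correct and is essentially the paper's argument. The paper likewise combines Propositions \ref{case1prop} and \ref{case2prop} to get $j\simeq k \implies g_A^*(j)\sim g_A^*(k)$ (and the same for $B$), then passes to $\sim$ along chains by transitivity; your remarks on the symmetry of $\simeq$ and on reflexive links are bookkeeping the paper leaves implicit.
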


\begin{proof} Propositions \ref{case1prop} and \ref{case2prop} demonstrate that 
\[ j\simeq k \implies g_A^*(j)\sim g_A^*(k),\]
 and since $\sim$ is defined via chains of $\simeq$-equivalences, we have that 
 \[ j\sim k \implies g_A^*(j)\sim g_A^*(k),\]
  and similarly for the monodromy group of $B$. 
\end{proof}

%A polynomial version of the following Theorem is usually attributed to Ritt \cite{MR1501189}, but the  independently discovered in a context much closer to our own by Cowen \cite{cowen2012finiteblaschkeproductscompositions} (Ritt worked with polynomials whereas Cowen worked with Blaschke products). We sketch a proof both for the sake of the reader, and because it will later be useful to be able to refer to steps in the proof. We follow \cite{MR3753897} (which in turn follows \cite{cowen2012finiteblaschkeproductscompositions}). 

%\begin{rem}\label{assumption_to_be_rem} For the remainder of this Section we will assume that $L$ is a Jordan curve. \emph{I need to come back and remove this assumption later.}
%\end{rem}

\begin{rem} The following Lemmas \ref{rittsthmpart1'}, \ref{rittsthm} are due to \cite{cowen2012finiteblaschkeproductscompositions}. The work \cite{cowen2012finiteblaschkeproductscompositions} is unpublished for reasons explained therein; an excellent account of \cite{cowen2012finiteblaschkeproductscompositions} may be found in \cite{MR3753897}, and related ideas are also found in the much older \cite{MR1501189}. We nevertheless include proofs of Lemmas \ref{rittsthmpart1'}, \ref{rittsthm} (following \cite{MR3753897} which in turn follows \cite{cowen2012finiteblaschkeproductscompositions}) to give a self-contained account of the proof of Theorem \ref{mon_thm}, but also because we will later refer to some steps in the proofs. % for the reader's sake, as the ideas are central to the proof of Theorem \ref{mon_thm}.
\end{rem}

% influenced by the work of \cite{cowen2012finiteblaschkeproductscompositions} (the work \cite{cowen2012finiteblaschkeproductscompositions} is unpublished for reasons explained therein; a detailed account of \cite{cowen2012finiteblaschkeproductscompositions} may also be found in \cite{MR3753897}) - related ideas are also found in  \cite{MR1501189}. When $L=\mathbb{T}$ and $B$, $C$ are Blaschke products the proofs of Lemma \ref{rittsthmpart1'} and Theorem \ref{rittsthm} are the same as in \cite{cowen2012finiteblaschkeproductscompositions}, but new ideas are needed in what follows for dealing with non-circular or disconnected $L$, as well as for dealing with non-Blaschke $B$, $C$. 

\begin{notation} Denote by $g_j$ the branch of $B^{-1}$ taking $\zeta$ to $b_j$ ($g_j$ is defined in a neighborhood of $\zeta$). We denote by $P\in\mathcal{P}$ the partition element containing $1$. 
\end{notation}

\begin{lem}\label{rittsthmpart1'} The mapping
\begin{equation}\label{defnofA} B_1(z):= \prod_{j\in P} g_j\circ B(z), \end{equation}
defined for all $z$ sufficiently close to $b_1$, extends to a proper map $B_1: \mathbb{D} \rightarrow\mathbb{D}$ of degree $|P|$. 
\end{lem}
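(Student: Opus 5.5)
The plan is to follow the Cowen argument: continue the function analytically and use the fact that the monodromy group of $B$ respects $\mathcal{P}$ (the Corollary above) to show the continuation is single-valued.

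\textbf{Step 1: local setup.} Near $b_1$ we have $B(z)$ close to $\zeta$, so each $g_j\circ B$ is a well-defined holomorphic function near $b_1$. Note that $g_1\circ B=\mathrm{id}$ there. Hence $B_1$ is holomorphic near $b_1$ and vanishes at no point of that neighborhood except where some $g_j\circ B(z)=0$.

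\textbf{Step 2: continuation along paths.} Let $\delta$ be a path in $\mathbb{D}$ starting at $b_1$ whose image $B(\delta)$ avoids the critical values of $B$. Continuing each $g_j$ along $B(\delta)$ permutes the branches. If $\delta$ is a closed loop, then $B(\delta)$ is a loop $\gamma$ based at $\zeta$ (after adjusting basepoints, e.g.\ by conjugating with a fixed path). Since $\delta$ returns to $b_1$, we have $\gamma_B^*(1)=1$, so $\gamma_B^*$ fixes $1\in P$. Because the monodromy group of $B$ respects $\mathcal{P}$, we get $\gamma_B^*(P)\subseteq P'$ with $1\in P'$, so $P'=P$. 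As $\gamma_B^*$ is a bijection, $\gamma_B^*$ maps $P$ onto $P$.

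Continuing the branches $\{g_j\}_{j\in P}$ along $\gamma$ therefore permutes them among themselves, and the product $B_1$ returns to itself. So $B_1$ continues to a single-valued holomorphic function on $\mathbb{D}$ minus the (finite) set $B^{-1}(\mathrm{CV}(B))$. Along any path the function is a product of values of branches of $B^{-1}$, each lying in $\mathbb{D}$, so it is bounded by $1$. The isolated singularities are therefore removable, and $B_1$ extends holomorphically to all of $\mathbb{D}$ with $|B_1|<1$.

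\textbf{Step 3: properness.} If $|z|\to1$, then $|B(z)|\to 1$, and every branch of $B^{-1}$ applied to points near $\mathbb{T}$ has modulus near $1$, since $B$ is proper on $\mathbb{D}$ and $B^{-1}$ of a compact subset of $\mathbb{D}$ is compact. Hence each factor has modulus tending to $1$, so $|B_1(z)|\to1$ and $B_1$ is proper. Indeed $B_1$ extends across $\mathbb{T}$ via the same formula and is a Blaschke product.

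\textbf{Step 4: the degree.} A proper self-map of $\mathbb{D}$ has degree equal to its number of zeros counted with multiplicity. I would compute this by showing that $B_1(z)=B_1(w)$ whenever $z,w$ lie in the same fiber of the map sending $z$ to the set of points $\{g_j(B(z))\}_{j\in P}$. A generic fiber of that map has exactly $|P|$ points, namely the $\mathcal{P}$-block containing $z$. I would rather count zeros of $B_1$ directly: $B_1(z)=0$ iff $g_j(B(z))=0$ for some $j$ and some branch, and track multiplicities via the block structure.

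\textbf{Main obstacle.} I expect the degree count to be the main obstacle, and specifically the bookkeeping of basepoints and branches in Step 2. The cleaner route I would try first is to show that $B_1(z)=B_1(z')$ exactly when $z'$ lies in the block of $z$ for generic $z$. This gives that $B_1$ is generically $|P|$-to-one.
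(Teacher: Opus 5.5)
Your Steps 1--3 follow the paper's proof. These are: continuation of the product along loops; single-valuedness because the monodromy of $B$ respects $\mathcal{P}$; boundedness, giving removable singularities on $B^{-1}(\mathrm{CV}(B))$; and properness inherited from $B$. Your observation that a loop at $b_1$ has $\gamma_B^*(1)=1$, hence $\gamma_B^*(P)=P$, is more explicit than the paper.

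The genuine gap is Step 4. The degree $|P|$ is part of the statement, and you do not prove it. What you do have is the lower bound: $B_1$ is constant on blocks, so it takes the value $\prod_{j\in P}b_j$ at the $|P|$ distinct points $b_j$, $j\in P$, and therefore $\deg B_1\geq|P|$. The converse you propose---that $B_1(z)=B_1(z')$ only if $z'$ lies in the block of $z$---is precisely the missing upper bound, and nothing in your outline yields it. It amounts to saying that distinct blocks have distinct products. That is evident a posteriori (by Vieta, once $B=B_2\circ B_1$ is known) but not a priori. Your zero-counting alternative does work when $0\notin B^{-1}(\mathrm{CV}(B))$: the zeros are then exactly the block of $0$, and each is simple because exactly one factor vanishes and that factor is locally the biholomorphism $g_k\circ B$. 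But if $0\in B^{-1}(\mathrm{CV}(B))$, the zeros lie where fibers of $B$ degenerate, and the multiplicity bookkeeping you allude to is not carried out.

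A clean way to close the gap is the argument principle on $\mathbb{T}$; this is what the paper's remark about increasing argument points to.
\begin{itemize}
\item Since $B$ has no critical points on $\mathbb{T}$, $B_1$ extends continuously to $\overline{\mathbb{D}}$ with $B_1(e^{it})=\prod_{k=1}^{|P|}w_k(t)$. Here the $w_k(t)\in\mathbb{T}$ vary continuously and satisfy $B(w_k(t))=B(e^{it})$.
\item Write $B(e^{i\theta})=e^{i\Theta(\theta)}$ with $\Theta$ strictly increasing and $\Theta(\theta+2\pi)=\Theta(\theta)+2\pi d$.
\item Then $w_k(t)=e^{i\theta_k(t)}$ with $\theta_k(t)=\Theta^{-1}(\Theta(t)+2\pi n_k)$ for a fixed integer $n_k$. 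Hence $\theta_k(t+2\pi)=\theta_k(t)+2\pi$, so each point of the block winds exactly once.
\item Therefore $\arg B_1(e^{it})$ increases by $2\pi|P|$ over one turn. Since the degree of a Blaschke product is the winding number of its boundary values, $\deg B_1=|P|$.
\end{itemize}
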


\begin{proof} First note that
\begin{equation}\label{d1formula} B_1(b_1) = \prod_{j\in P} b_j.\end{equation}
 Moreover, the assumption that the monodromy group respects the partition $\mathcal{P}$ implies that if $j\in P$ and $\gamma\subset \mathbb{D}$ is a loop based at $b_1$ avoiding $B^{-1}(\textrm{CV}(B))$, then analytic continuation of $(g_j\circ B, b_1)$ along $\gamma$ leads to a branch $(g_{k}\circ B, b_{k})$ where $j\sim k$, in other words $k\in P$. Thus analytic continuation of $B_1$ along any loop in $\mathbb{D}$ based at $b_1$ leads to the same terminating value (\ref{d1formula}): the terms in the product are rearranged but the value of the product over all terms remains the same. Thus, analytic continuation gives a single-valued extension of $B_1$ to a map $B_1: \mathbb{D}\setminus B^{-1}(\textrm{CV}(B)) \rightarrow \mathbb{D}$. In fact, since $B: \mathbb{D} \rightarrow\mathbb{D}$ is proper, we see that $B_1$ is unimodular exactly on $\mathbb{T}$. Thus the singularities at $B^{-1}(\textrm{CV}(B))$ are removable and $B_1$ extends to a proper map $B_1: \mathbb{D}\rightarrow\mathbb{D}$. It may be verified that $B_1$ has increasing argument along $B_1^{-1}(|\zeta|\mathbb{T})$, and hence $B_1$ takes on the value $\zeta$ exactly at $\{b_i\}_{i\in P}$; in particular $\deg(B_1)=|P|$.
\end{proof}

\noindent Since the only proper holomorphic maps $\mathbb{D}\mapsto\mathbb{D}$ are Blaschke products, we note that $B_1$ is a Blaschke product.

\begin{lem}\label{rittsthm} There is a Blaschke product $B_2$ so that $B=B_2\circ B_1$.%; moreover $\{c_j\}_{j\in P}$ is a fiber of $B_2$ for each $P\in\mathcal{P}$. 
\end{lem}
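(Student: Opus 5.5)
We define $B_2$ on $B_1(\mathbb{D})=\mathbb{D}$ by requiring $B_2(B_1(z))=B(z)$. To make this legitimate we must show that $B$ is constant on the fibres of $B_1$. We then check that the resulting function is a proper holomorphic self-map of $\mathbb{D}$, which by the remark following Lemma \ref{rittsthmpart1'} is a Blaschke product.

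\textbf{Step 1: a local branch of $B_2$.} Near $w_0:=B_1(b_1)=\prod_{j\in P} b_j$ we take a local inverse $h$ of $B_1$ with $h(w_0)=b_1$. Since $B_1$ has degree $|P|$ and takes the value $\zeta$ at $b_1$ with multiplicity one for generic $\zeta$, such an $h$ exists, possibly after perturbing the base point. We set $B_2:=B\circ h$ near $w_0$.

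\textbf{Step 2: single-valued continuation.} We continue $B_2$ analytically along loops in $\mathbb{D}$ avoiding the finitely many critical values of $B_1$ and the images of critical points. Continuing $h$ along a loop based at $w_0$ yields another local inverse $\tilde h$ of $B_1$ near $w_0$, with $\tilde h(w_0)$ some point of $B_1^{-1}(w_0)$. We must show $B\circ\tilde h=B\circ h$ near $w_0$.

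\textbf{Step 3: $B$ is constant on fibres of $B_1$.} Lemma \ref{rittsthmpart1'} shows that the fibre $B_1^{-1}(B_1(b_1))$ is exactly $\{b_i\}_{i\in P}$, and these points all satisfy $B(b_i)=\zeta$. More generally, for $z$ near $b_1$ we have $B_1(g_i\circ B(z))=B_1(z)$ for each $i\in P$. This holds because $B_1\circ g_i\circ B$ is the product over $j\in P$ of the branches $g_j\circ B$ evaluated at $g_i(B(z))$. The monodromy of $B$ respects $\mathcal{P}$, so continuing from $b_1$ to $b_i$ permutes the set $\{g_j\}_{j\in P}$, and the product is unchanged. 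Hence $\{g_i(B(z))\}_{i\in P}\subseteq B_1^{-1}(B_1(z))$. Both sets have cardinality $|P|$ for generic $z$, so they are equal. Every point $u$ in this fibre therefore satisfies $B(u)=B(g_i(B(z)))=B(z)$. This gives $B\circ\tilde h=B\circ h$ on an open set, so $B_2$ is single-valued on the punctured disk. It is bounded by $1$, so the removable singularity theorem extends it holomorphically to all of $\mathbb{D}$, and $B_2\circ B_1=B$ holds by the identity theorem.

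\textbf{Step 4: properness.} Let $w\to\mathbb{T}$ and pick $z\in B_1^{-1}(w)$. Since $B_1$ is proper, $z\to\mathbb{T}$, and since $B$ is proper, $|B_2(w)|=|B(z)|\to 1$. So $B_2$ is proper, hence a Blaschke product, and its degree is $d/|P|$.

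The main obstacle is Step 3, specifically the inclusion showing that each $g_i\circ B(z)$ lies in the same $B_1$-fibre as $z$. This is exactly where the fact that the monodromy group respects $\mathcal{P}$ is used. Some care is also needed to pass from a neighbourhood of $b_1$ to global fibres, and I would handle this by analytic continuation combined with the identity theorem.
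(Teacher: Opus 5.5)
Your proof is correct, but it takes a different route from the paper's.

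\textbf{The paper's route.} The paper never shows directly that $B$ is constant on the fibres of $B_1$. It continues $B_1$ from $b_1$ to $b_k$ for $k$ in an arbitrary block $P'$ and obtains $B_1(b_k)=\prod_{\ell\in P'}b_\ell$. It then uses the fact that every block of $\mathcal{P}$ has size $|P|=\deg B_1$, which it attributes to transitivity of the monodromy group. From this it concludes that each $\{b_k\}_{k\in P'}$ is a full fibre of $B_1$. Next it picks any Blaschke product $B_2$ of degree $d/|P|$ sending the points $\prod_{\ell\in P'}b_\ell$ to $\zeta$. It finishes with a rigidity fact: two Blaschke products of the same degree with a common fibre over $\zeta$ differ by post-composition with an automorphism of $\mathbb{D}$, and that automorphism is absorbed into $B_2$.

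\textbf{Your route.} You use only the block $P\ni 1$ and a neighbourhood of $b_1$. From this you show $B_1^{-1}(B_1(z))=\{g_i(B(z))\}_{i\in P}$, so $B\circ B_1^{-1}$ is a germ with trivial monodromy. You then obtain $B_2$ from the monodromy theorem, removable singularities and properness.

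\textbf{Comparison.} Your argument is the general ``a map constant on the fibres of a branched cover factors through it'' argument. It needs neither the Blaschke interpolation and rigidity step nor the fact that all blocks have equal size. The paper's argument is shorter once rigidity is granted. It also records explicitly that $\{\prod_{\ell\in P'}b_\ell\}_{P'\in\mathcal{P}}$ is the fibre of $B_2$ over $\zeta$, which is used later in Lemma \ref{analytic_cont_along_gamma}. Your construction gives $B_2^{-1}(\zeta)=\{B_1(b_k)\}_k$, but identifying these values as the block products still requires the paper's continuation computation.

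\textbf{A small slip in Step 1.} $B_1(b_1)=w_0$, not $\zeta$. No perturbation of the base point is needed: the fibre of $B_1$ over $w_0$ consists of the $|P|=\deg B_1$ distinct points $b_i$, $i\in P$, so $b_1$ is already a simple point of $B_1$.
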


\begin{proof} As before, we denote by $P\in\mathcal{P}$ the partition element containing $b_1$. Let $P'\in\mathcal{P}$ be a partition element, and $k\in P'$. Denote by $\eta\subset \mathbb{D}$ an arc connecting $b_1$ to $b_k$, and let $j\in P$. Then, since $B$ respects $\mathcal{P}$ and $j\in P$, analytic continuation of $(g_j\circ B, b_1)$ along $\eta$ leads to a branch $(g_{\ell}\circ B, b_{\ell})$ where $\ell\sim k$, in other words $\ell\in P'$. Thus, recalling (\ref{defnofA}), we see that analytically continuing $B_1$ along $\eta$ leads to the identity
\begin{equation}\label{thisisafiber} B_1(b_k)=\prod_{\ell\in P'} b_\ell.\end{equation}
In particular, as the right-hand side of (\ref{thisisafiber}) depends on $P'$ but not on $k\in P'$, the points $\{b_k\}_{k\in P'}$ lie in a common fiber of $B_1$ over the value (\ref{thisisafiber}): moreover there are no other fiber elements since $|P'|=|P|=\deg(B)$ (the fact that every partition element has the same size is a simple algebraic consequence of Definition \ref{respectdefn} deduced from transitivity of the monodromy group - see for instance Section 9.5 of \cite{MR3753897} for further discussion).

Let $B_2:\mathbb{D}\rightarrow\mathbb{D}$ be a Blaschke product of degree $\deg(B)/|P|$ so that
\begin{equation}\label{thisiswhereidefineb2} B_2\left( \prod_{j\in P'} b_j \right) = \zeta \textrm{ for each } P'\in \mathcal{P}.
\end{equation}
Then $B_2\circ B_1$ is of degree $\deg(A)$ and satisfies 
\[(B_2\circ B_1)^{-1}(\zeta)= \{b_j\}_{j=1}^d. \]
Since $B$, $B_2\circ B_1$ are proper onto $\mathbb{D}$ of the same degree, and share a common fiber over $\zeta\in\mathbb{D}$, we conclude that $M\circ B_2\circ B_1=B$ for some M\"obius transformation $M$ preserving $\mathbb{D}$, and $M$ may be absorbed into $B_2$.
\end{proof}

\noindent Combining Lemmas \ref{rittsthmpart1'} and \ref{rittsthm} we have the following.

\begin{thm}\label{ccompositionthm} There exist Blaschke products $B_1$ (resp. $B_2$) of degree $|P|$ (resp. $d/|P|$) so that $B=B_2\circ B_1$.
\end{thm}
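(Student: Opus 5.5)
The statement is just Lemmas \ref{rittsthmpart1'} and \ref{rittsthm} put together, so the proof is essentially bookkeeping. Let $P\in\mathcal{P}$ be the partition element containing $1$. Lemma \ref{rittsthmpart1'} gives the Blaschke product $B_1(z)=\prod_{j\in P} g_j\circ B(z)$, extended by analytic continuation, which is proper of degree $|P|$. Lemma \ref{rittsthm} gives a Blaschke product $B_2$ with $B=B_2\circ B_1$. Degrees multiply under composition, so $\deg(B_2)=\deg(B)/\deg(B_1)=d/|P|$. This is consistent with the construction of $B_2$ in (\ref{thisiswhereidefineb2}), where $B_2$ is chosen of degree equal to the number of partition elements.

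The one point to check is that this degree count is well defined, i.e.\ that $|P|$ divides $d$ and every element of $\mathcal{P}$ has exactly $|P|$ elements.

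1. The monodromy group of $B$ is transitive on $\{1,\dots,d\}$, because $B^{-1}(\mathbb{D})=\mathbb{D}$ is connected and a path in $\mathbb{D}$ joining $b_1$ to $b_k$ projects under $B$ to a loop based at $\zeta$.
2. By Definition \ref{respectdefn}, each group element $g$ sends a block $Q\in\mathcal{P}$ into a block $Q'$. Applying the same to $g^{-1}$ shows $gQ=Q'$, so each group element induces a bijection between blocks.
3. By transitivity every block is the image of $P$ under some group element, so all blocks have size $|P|$. Hence $|P|$ divides $d$ and the number of blocks is $d/|P|$.

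This is the argument cited from Section 9.5 of \cite{MR3753897}.

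The only possible subtlety is that Lemma \ref{rittsthm} used a Möbius normalization $M$ absorbed into $B_2$. This does not change degrees, so the assertion $\deg(B_2)=d/|P|$ is unaffected. No step here is a real obstacle; the content lies in the two preceding lemmas and in the Corollary that the monodromy group of $B$ respects $\mathcal{P}$. Since that Corollary applies equally to $A$, the same argument yields $A=A_2\circ A_1$ with $\deg(A_1)=|P|$.
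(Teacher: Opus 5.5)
Your proposal is correct and follows the paper, which obtains this theorem simply by combining Lemmas \ref{rittsthmpart1'} and \ref{rittsthm}. Your transitivity argument for equal block sizes fills in the fact the paper cites from Section 9.5 of \cite{MR3753897}; the path from $b_1$ to $b_k$ should be chosen to avoid the finitely many points of $B^{-1}(\textrm{CV}(B))$, and this fact is needed inside the proof of Lemma \ref{rittsthm} rather than for the final degree count, since multiplicativity of degree already gives $\deg(B_2)=d/|P|$.
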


\noindent Recalling that the partition $\mathcal{P}$ is the same for both $A$ and $B$, we see that exactly the same argument replacing $B$ with $A$ yields the following. 

\begin{thm}\label{bcompositionthm} There exists Blaschke products $A_1$ (resp. $A_2$) of degree $|P|$ (resp. $d/|P|$) so that $A=A_2\circ A_1$.
\end{thm}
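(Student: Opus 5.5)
The plan is to rerun the argument of Lemmas \ref{rittsthmpart1'} and \ref{rittsthm} with $A$ in place of $B$. The only input those proofs used about $B$ was that its monodromy group respects the partition $\mathcal{P}$, and the Corollary preceding them shows that the monodromy group of $A$ respects the same $\mathcal{P}$.

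\textbf{Step 1: the first factor.} Let $h_j$ be the branch of $A^{-1}$ near $\zeta$ with $h_j(\zeta)=a_j$, and set
\[ A_1(z):=\prod_{j\in P} h_j\circ A(z) \]
near $a_1$, where $P\ni 1$ is the same partition element as before.

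\textbf{Step 2: single-valued extension.} Continue $A_1$ along a loop based at $a_1$ that avoids $A^{-1}(\textrm{CV}(A))$. Its image under $A$ is a loop $\gamma$ based at $\zeta$, and the branch $h_j\circ A$ continues to $h_{\gamma_A^*(j)}\circ A$ (up to the convention on inverse versus forward permutation). Since the monodromy group of $A$ respects $\mathcal{P}$, it maps $P$ into a single partition element. That element contains $\gamma_A^*(1)=1$, so it is $P$ itself, and the image has $|P|$ elements, so the map permutes $P$. Hence the product is unchanged under continuation, and $A_1$ is single-valued on $\mathbb{D}\setminus A^{-1}(\textrm{CV}(A))$.

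\textbf{Step 3: $A_1$ is a Blaschke product of degree $|P|$.} The map $A_1$ is bounded by $1$ and tends to modulus $1$ at $\mathbb{T}$ because $A$ is proper. So the singularities are removable and $A_1$ is proper $\mathbb{D}\to\mathbb{D}$, hence a Blaschke product. For the degree, argue as in Lemma \ref{rittsthmpart1'}: follow the argument along $A^{-1}(|\zeta|\mathbb{T})$ to see that $A_1^{-1}(\zeta)=\{a_i\}_{i\in P}$.

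\textbf{Step 4: the second factor.} As in Lemma \ref{rittsthm}, continue $A_1$ along arcs from $a_1$ to $a_k$ with $k\in P'$. This gives $A_1(a_k)=\prod_{\ell\in P'}a_\ell$, so each block $\{a_k\}_{k\in P'}$ is a full fiber of $A_1$; this uses that all blocks have size $|P|$, by transitivity. Now choose a Blaschke product $A_2$ of degree $d/|P|$ whose fiber over $\zeta$ is the set of these $d/|P|$ products. Such an $A_2$ exists because any $n$ points of $\mathbb{D}$ form the zero set of a degree-$n$ Blaschke product, and we can postcompose with a disk automorphism sending $0$ to $\zeta$. Then $A_2\circ A_1$ and $A$ are proper of degree $d$ with the same fiber over $\zeta$. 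So they differ by a disk Möbius map, which we absorb into $A_2$.

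\textbf{Main obstacle.} The delicate points are the same as for $B$: checking that the product values attached to distinct blocks are distinct, which is needed so that $A_2$ has exactly the right fiber, and the degree count in Step 3. I expect distinctness to follow from $\deg A_1=|P|$: the fiber of $A_1$ over a block product is exactly that block, so different blocks give different values.
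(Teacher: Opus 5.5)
Your route is the paper's own. The paper proves this statement in one line, saying that the proofs of Lemmas \ref{rittsthmpart1'} and \ref{rittsthm} go through verbatim with $A$ in place of $B$, because the monodromy group of $A$ respects the same partition $\mathcal{P}$. Your Steps 1, 2 and 4 carry this out correctly. Step 2 is in fact cleaner than the paper's text: you note that $A(\delta)$ lifts to $\delta$ itself, so $\gamma_A^*(1)=1$ and the block $P$ is permuted, and this is exactly what makes the product single-valued.

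The one step that fails as written is the degree count in Step 3, whose wording you inherited from the proof of Lemma \ref{rittsthmpart1'}.
\begin{itemize}
\item \emph{The claim is false.} $A_1^{-1}(\zeta)=\{a_i\}_{i\in P}$ does not hold in general, since $A_1(a_1)=\prod_{j\in P}a_j$. For $A(z)=z^4$, $\zeta=1/16$ and the invariant partition $\{\{\pm\frac12\},\{\pm\frac{i}{2}\}\}$, your formula gives $A_1(z)=-z^2$. Its fiber over $\zeta$ is $\{\pm\frac{i}{4}\}$, while the block $\{\pm\frac12\}$ is the fiber over $-\frac14$.
\item \emph{Step 4 depends on it.} What you actually need is $\deg A_1=|P|$. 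Blocks being full fibers, distinctness of the block products, and $\deg(A_2\circ A_1)=d$ all rest on this. ``Following the argument along $A^{-1}(|\zeta|\mathbb{T})$'' does not supply it, and for general $\zeta$ that level set need not even be a single smooth curve.
\item \emph{A clean fix is a winding count near the boundary.} $A$ has no critical points on $\mathbb{T}$, and $A|_{\mathbb{T}}$ is a $d$-fold covering of $\mathbb{T}$. Let $z$ run once around $\mathbb{T}$, or around a level curve $A^{-1}(\rho\mathbb{T})$ with $\rho$ close to $1$ if you prefer to stay inside $\mathbb{D}$. Then $A(z)$ runs $d$ times around. Each of the $|P|$ continued factors $h_j\circ A(z)$ is a lift of this loop through a $d$-fold covering of circles, so it closes up after running exactly once around.
\item \emph{Conclusion.} $\arg A_1$ increases by $2\pi|P|$, so by the argument principle $A_1$ has exactly $|P|$ zeros in $\mathbb{D}$, i.e.\ $\deg A_1=|P|$.
\end{itemize}
With this in place, your Step 4 and your distinctness remark go through as written.
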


We will now prove that the second factors $A_2$, $B_2$ in the decompositions $A=A_2\circ A_1$, $B=B_2\circ B_1$ coincide up to $\textrm{M\"ob}$. Recall from the proofs of Lemmas \ref{rittsthmpart1'} and \ref{rittsthm} that for each $P'\in\mathcal{P}$, the points $\{b_i\}_{i\in P'}$ (resp. $\{a_i\}_{i\in P'}$) constitute a common fiber of $B_1$ (resp. $A_1$) over the value $b(P'):=\prod_{i\in P'}b_i$ (resp $a(P'):=\prod_{i\in P'}a_i$). Recall also that $B_2$ (resp. $A_2$) maps each $b(P')$ (resp. $a(P')$) to $\zeta$, and that the element of $\mathcal{P}$ containing $1$ is denoted $P$.

% Recall our notation $P\in\mathcal{P}$ for the partition element containing $1$. From the proofs of Lemmas \ref{rittsthmpart1'} and \ref{rittsthm}, we see that the points $\{b_i\}_{i\in P}$ (resp. $\{a_i\}_{i\in P}$) lie in a common fiber of $B_1$ (resp. $A_1$) over the value $z:=\prod_{i\in P}b_i$ (resp $w:=\prod_{i\in P}a_i$).

\begin{lem}\label{analytic_cont_along_gamma} Let $\gamma$ be an oriented loop based at $b(P)$ so that $B_2(\gamma)$ avoids the critical values of $A_2$, and let $g$ be the branch of $A_2^{-1}$ mapping $\zeta$ to $a(P)$. Then analytic continuation of $g\circ B_2$ along the loop $\gamma$ terminates at $a(P)$. 
\end{lem}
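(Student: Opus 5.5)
The plan is to realize the continuation of $g\circ B_2$ along $\gamma$ as a lift through $A_2$, and to build that lift using the relation $\simeq$ of Definition \ref{pre_reltn_defn}, which was designed precisely to link $B$-monodromy with $A$-monodromy.

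\textbf{Step 1: setup and genericity.} By hypothesis $B_2(\gamma)$ avoids $\textrm{CV}(A_2)$. Hence analytic continuation of $g\circ B_2$ along $\gamma$ is the same thing as the unique lift of the path $\sigma:=B_2\circ\gamma$ through $A_2$ starting at $g(\zeta)=a(P)$. Here we use $B_2(b(P))=\zeta$ from (\ref{thisiswhereidefineb2}). So it suffices to show that this $A_2$-lift of $\sigma$ ends at $a(P)$.

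Continuation of $g\circ B_2$ depends only on the homotopy class of $\gamma$ rel endpoints in the complement of the finite set $B_2^{-1}(\textrm{CV}(A_2))$. We may therefore perturb $\gamma$ within that class so that:
\begin{itemize}
\item $\gamma$ avoids $\textrm{CV}(B_1)$;
\item $\sigma$ avoids $\textrm{CV}(A)\cup\textrm{CV}(B)$, apart from its endpoint $\zeta$, which is already a regular value.
\end{itemize}
Both sets are finite, so a small perturbation suffices. After this, $\sigma$ is a loop in $\mathbb{D}$ based at $\zeta$ which is admissible for the monodromy of both $A$ and $B$.

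\textbf{Step 2: the $B$-side.} Recall from the proof of Lemma \ref{rittsthm} that $\{b_i\}_{i\in P}$ is exactly the fiber of $B_1$ over $b(P)$. Lift $\gamma$ through $B_1$ starting at $b_1$. This gives a path $\delta$ from $b_1$ to some $b_k$ with $k\in P$. Then $B\circ\delta=B_2\circ B_1\circ\delta=\sigma$, so $\delta$ is the $B$-lift of $\sigma$ from $b_1$. By the definition of monodromy in Remark \ref{mon_group_defn}, this means $\sigma_B^*(1)=k$.

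\textbf{Step 3: the $A$-side.} Let $m:=\sigma_A^*(1)$, and let $\varepsilon$ be the $A$-lift of $\sigma$ from $a_1$ to $a_m$. The same loop $\sigma$ gives $\sigma_B^*(1)=k$ and $\sigma_A^*(1)=m$, so $k\simeq m$ by Definition \ref{pre_reltn_defn}. Since $k\in P$, it follows that $m\in P$.

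\textbf{Step 4: conclusion.} The path $A_1\circ\varepsilon$ runs from $A_1(a_1)=a(P)$ to $A_1(a_m)=a(P)$; both equalities hold because $1,m\in P$ and $\{a_i\}_{i\in P}$ is the fiber of $A_1$ over $a(P)$. Moreover $A_2\circ(A_1\circ\varepsilon)=A\circ\varepsilon=\sigma$. Thus $A_1\circ\varepsilon$ is an $A_2$-lift of $\sigma$ starting at $a(P)$. By uniqueness of lifts away from $\textrm{CV}(A_2)$, it coincides with the continuation of $g\circ B_2$ along $\gamma$. That continuation therefore terminates at $a(P)$.

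\textbf{Main obstacle.} The only delicate point is the bookkeeping in Step 1: justifying that the perturbation of $\gamma$ does not change the continuation of $g\circ B_2$, and checking that the base points line up (that $\gamma$ starting at $b(P)$ lifts through $B_1$ from $b_1$). The key idea itself, invoking $\simeq$ for the single loop $\sigma$, is short.
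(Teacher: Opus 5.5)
Your proof is correct and is essentially the paper's argument: both apply the relation $\simeq$ to the single loop $B_2(\gamma)$ based at $\zeta$. Both use $B=B_2\circ B_1$ to see that its $B$-lift from $b_1$ ends at an index in $P$, and use $A=A_2\circ A_1$ with the fiber structure of $A_1$ to tie the endpoint of its $A$-lift to the endpoint of the $A_2$-lift from $a(P)$.

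The differences are cosmetic. The paper runs the $A$-side in reverse: the $A_2$-lift ends at some $a(P')$, which forces the $A$-monodromy image into $P'$, and then $\simeq$ gives $P'=P$. Your Step 1 perturbation makes explicit the genericity needed for $(B_2\circ\gamma)_A^*$ and $(B_2\circ\gamma)_B^*$ to be defined, which the paper leaves implicit.
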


\begin{proof} Let $\Gamma:=B_2(\gamma)$. Consider the monodromy group element $\Gamma_B$. Let $i\in P$. Since $\gamma$ terminates at $b(P)$ we have $\Gamma_B(i)\in P$. 

On the other hand, since $g$ is a local inverse to $A_2$, the terminating value of the analytic continuation of $g\circ B_2$ along $\gamma$ must be $a(P')$ for some $P'\in\mathcal{P}$ (since $\{a(P')\}_{P'\in\mathcal{P}}$ is exactly the fiber of $A_2$ over the value $\zeta$). This means that $\Gamma_A(i)\in P'$. 

But then by definition of $\sim$ we have $\Gamma_B(i)\sim\Gamma_A(i)$ and so $P=P'$ must be the same partition element. In other words, the terminating value of the analytic continuation of $g\circ B_2$ along $\gamma$ is $a(P)$, as needed. 
\end{proof}

\begin{cor}\label{extendstomobiuscor} The Blaschke products $A_2$, $B_2$ satisfy $A_2 \circ M = B_2$ for some $M\in\emph{M\"ob}_{\mathbb{D}}$. 
\end{cor}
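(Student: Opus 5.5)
The plan is to construct $M$ as a global branch of $A_2^{-1}\circ B_2$ on $\mathbb{D}$. Let $g$ be the local branch of $A_2^{-1}$ near $\zeta$ sending $\zeta$ to $a(P)$. Define $M:=g\circ B_2$ near $b(P)$, so that $M(b(P))=a(P)$. We will show that $M$ continues analytically along every path in $\mathbb{D}$ that avoids a finite exceptional set, and that the continuation is single-valued. We will then show it extends to a proper holomorphic self-map of $\mathbb{D}$ of degree one. Since $A_2\circ M=B_2$ holds near $b(P)$, it then holds everywhere by the identity principle.

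\textbf{Single-valuedness.} Let $E:=B_2^{-1}(\mathrm{CV}(A_2))\cap\mathbb{D}$, a finite set. Along any path in $\mathbb{D}\setminus E$ starting at $b(P)$, the image under $B_2$ avoids the critical values of $A_2$. Hence $g\circ B_2$ continues along the path: continue $g$ along the image path and compose with $B_2$. For loops based at $b(P)$, Lemma \ref{analytic_cont_along_gamma} says the continuation returns to the value $a(P)$ at the basepoint. This is the main obstacle, since returning to the same \emph{value} must be upgraded to returning to the same \emph{branch}. For that we use that $A_2$ is unbranched over $\zeta$. To justify this, note that $\zeta$ is not a critical value of $A$, so it is not a critical value of $A_2$: indeed $A=A_2\circ A_1$ and $A_2^{-1}(\zeta)$ has $d/|P|$ distinct points $a(P')$. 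The local inverse of $A_2$ near $\zeta$ taking $\zeta$ to $a(P)$ is therefore unique, so the continued germ at $b(P)$ agrees with the original one. By the monodromy theorem, $M$ is a well-defined holomorphic function on $\mathbb{D}\setminus E$.

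\textbf{Extension.} Since $A_2\circ M=B_2$ and $|B_2|<1$ on $\mathbb{D}$, we have $M(z)\in A_2^{-1}(\mathbb{D})=\mathbb{D}$. So $M$ is bounded, the points of $E$ are removable singularities, and $M$ is holomorphic on $\mathbb{D}$ with values in $\overline{\mathbb{D}}$. By the open mapping theorem, and because $M$ is nonconstant (as $B_2$ is), $M$ maps $\mathbb{D}$ into $\mathbb{D}$. Moreover $M$ is proper: if $|z|\to1$ then $|B_2(z)|\to 1$, so $|A_2(M(z))|\to1$, which forces $|M(z)|\to1$ since $A_2$ is proper. Thus $M$ is a Blaschke product.

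\textbf{Degree one.} We have $\deg B_2=\deg A_2\cdot\deg M$. By Theorems \ref{ccompositionthm} and \ref{bcompositionthm}, $\deg B_2=\deg A_2=d/|P|$, so $\deg M=1$ and $M\in\mathrm{M\ddot{o}b}_{\mathbb{D}}$. The remaining points to verify carefully are the single-valuedness step (the germ versus value issue handled above) and the removability at $E$.
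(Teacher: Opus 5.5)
Your proposal is correct and follows the paper's proof: you continue $g\circ B_2$ from $b(P)$, use Lemma \ref{analytic_cont_along_gamma} to get single-valuedness, extend to a proper self-map of $\mathbb{D}$, and conclude that this map has degree one. You also make explicit two points the paper leaves implicit. First, you upgrade the returned value $a(P)$ to the returned germ, using that $\zeta$ is not a critical value of $A_2$. Second, you obtain degree one from $\deg B_2=\deg A_2\cdot\deg M$ with $\deg A_2=\deg B_2=d/|P|$, rather than from the paper's bare assertion that the map is injective on $\mathbb{T}$.
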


\begin{proof} By Lemma \ref{analytic_cont_along_gamma}, analytic continuation of $g\circ B_2$ along any loop based at $b(P)$ leads to the same terminating value $a(P)$. Thus analytic continuation of $g\circ B_2$ gives a well-defined extension of $g\circ B_2$ to a proper map $\mathbb{D}\mapsto\mathbb{D}$, and this proper map is $1:1$ on $\mathbb{T}$ hence a M\"obius transformation $M$. Thus $g\circ B_2=M$ and so $B_2=A_2\circ M$ as needed.
\end{proof}

\noindent The $\impliedby$ direction of Theorem \ref{mon_thm} is now proven with $C:=B_2$ (note that $\deg(C)=d/|P|$ by Theorem \ref{ccompositionthm}, and $d/|P|>1$ since $\sim$ is assumed to be non-trivial), with $B_1$ as already defined, and by absorbing $M$, $M^{-1}$ (where $M$ is as in Corollary \ref{extendstomobiuscor})  into $A_2$, $A_1$:
\[ A=A_2\circ A_1=A_2\circ M\circ M^{-1}\circ A_1=C\circ M^{-1}\circ A_1. \] 

\begin{rem} The $\implies$ direction of Theorem \ref{mon_thm} is simpler to prove: if $A=C\circ A_1$, $B=C\circ B_1$ for $\deg(C)>1$, then one chooses any ordering $a_1$, ..., $a_d$ and $b_1$, ..., $b_d$ so that $a_i$ and $b_i$ both lie in a fiber over a common point in $C^{-1}(\zeta)$ for each $1\leq i \leq d$. Then the equivalence relation $\sim_1$ of $\{1, ..., d\}$ defined by $i\sim_1j$ if and only if $A(a_i)=A(a_j)$ (which also occurs if and only if $B(b_i)=B(b_j)$) defines a non-trivial equivalence relation satisfying that $\gamma_A^*(i)\sim_1\gamma_B^*(i)$ for each $i$. Since $\sim_1$ is a non-trivial equivalence relation containing the relation $\simeq$ of Definition \ref{pre_reltn_defn}, the equivalence relation $\sim$ of Definition \ref{pre_reltn_defn} generated by $\simeq$ is also non-trivial.
\end{rem}

%\begin{rem} Although our interest in Theorem \ref{mon_thm} is primarily with regards to Blaschke products, it may likely be formulated in other settings such as 
%\end{rem}


\begin{thebibliography}{AJKS11}

\bibitem[AJKS11]{MR2892610}
Kari Astala, Peter Jones, Antti Kupiainen, and Eero Saksman.
\newblock Random conformal weldings.
\newblock {\em Acta Math.}, 207(2):203--254, 2011.

\bibitem[BEL25]{MR4880202}
Christopher~J. Bishop, Alexandre Eremenko, and Kirill Lazebnik.
\newblock On the shapes of rational lemniscates.
\newblock {\em Geom. Funct. Anal.}, 35(2):359--407, 2025.

\bibitem[BF14]{MR3445628}
Bodil Branner and N\'{u}ria Fagella.
\newblock {\em Quasiconformal surgery in holomorphic dynamics}, volume 141 of
  {\em Cambridge Studies in Advanced Mathematics}.
\newblock Cambridge University Press, Cambridge, 2014.
\newblock With contributions by Xavier Buff, Shaun Bullett, Adam L. Epstein,
  Peter Ha\"{\i}ssinsky, Christian Henriksen, Carsten L. Petersen, Kevin M.
  Pilgrim, Tan Lei and Michael Yampolsky.

\bibitem[Bis07]{MR2373370}
Christopher~J. Bishop.
\newblock Conformal welding and {K}oebe's theorem.
\newblock {\em Ann. of Math. (2)}, 166(3):613--656, 2007.

\bibitem[BLM16]{MR3539379}
Mario Bonk, Mikhail Lyubich, and Sergei Merenkov.
\newblock Quasisymmetries of {S}ierpi\'nski carpet {J}ulia sets.
\newblock {\em Adv. Math.}, 301:383--422, 2016.

\bibitem[Cow12]{cowen2012finiteblaschkeproductscompositions}
Carl~C. Cowen.
\newblock Finite blaschke products as compositions of other finite blaschke
  products.
\newblock \url{https://arxiv.org/abs/1207.4010}, 2012.

\bibitem[EKS11]{MR2868587}
P.~Ebenfelt, D.~Khavinson, and H.~S. Shapiro.
\newblock Two-dimensional shapes and lemniscates.
\newblock In {\em Complex analysis and dynamical systems {IV}. {P}art 1},
  volume 553 of {\em Contemp. Math.}, pages 45--59. Amer. Math. Soc.,
  Providence, RI, 2011.

\bibitem[FKV18]{MR3784168}
Anastasia Frolova, Dmitry Khavinson, and Alexander Vasil'ev.
\newblock Polynomial lemniscates and their fingerprints: from geometry to
  topology.
\newblock In {\em Complex analysis and dynamical systems}, Trends Math., pages
  103--128. Birkh\"{a}user/Springer, Cham, 2018.

\bibitem[GMR17]{MR3753897}
Stephan~Ramon Garcia, Javad Mashreghi, and William~T. Ross.
\newblock Finite {B}laschke products: a survey.
\newblock In {\em Harmonic analysis, function theory, operator theory, and
  their applications}, volume~19 of {\em Theta Ser. Adv. Math.}, pages
  133--158. Theta, Bucharest, 2017.

\bibitem[Ham02]{MR1966191}
D.~H. Hamilton.
\newblock Conformal welding.
\newblock In {\em Handbook of complex analysis: geometric function theory,
  {V}ol.\ 1}, pages 137--146. North-Holland, Amsterdam, 2002.

\bibitem[Kir87]{MR902292}
A.~A. Kirillov.
\newblock K\"ahler structure on the {$K$}-orbits of a group of diffeomorphisms
  of the circle.
\newblock {\em Funktsional. Anal. i Prilozhen.}, 21(2):42--45, 1987.

\bibitem[LM18]{MR3816522}
Mikhail Lyubich and Sergei Merenkov.
\newblock Quasisymmetries of the basilica and the {T}hompson group.
\newblock {\em Geom. Funct. Anal.}, 28(3):727--754, 2018.

\bibitem[LP97]{MR1376996}
G.~Levin and F.~Przytycki.
\newblock When do two rational functions have the same {J}ulia set?
\newblock {\em Proc. Amer. Math. Soc.}, 125(7):2179--2190, 1997.

\bibitem[Mar11]{MR2858965}
Donald~E. Marshall.
\newblock Conformal welding for finitely connected regions.
\newblock {\em Comput. Methods Funct. Theory}, 11(2):655--669, 2011.

\bibitem[McM85]{mcmullen1985simultaneous}
Curtis~T. McMullen.
\newblock Simultaneous uniformization of blaschke products.
\newblock Preliminary notes, 1985.

\bibitem[OP25]{MR4930120}
Stepan Orevkov and Fedor Pakovich.
\newblock On intersection of lemniscates of rational functions.
\newblock {\em Arnold Math. J.}, 11(1):7--26, 2025.

\bibitem[Pak09]{MR2557404}
F.~Pakovich.
\newblock Prime and composite {L}aurent polynomials.
\newblock {\em Bull. Sci. Math.}, 133(7):693--732, 2009.

\bibitem[Pak11]{MR2774592}
F.~Pakovich.
\newblock Algebraic curves {$P(x)-Q(y)=0$} and functional equations.
\newblock {\em Complex Var. Elliptic Equ.}, 56(1-4):199--213, 2011.

\bibitem[Pak20]{MR4181717}
F.~Pakovich.
\newblock On rational functions sharing the measure of maximal entropy.
\newblock {\em Arnold Math. J.}, 6(3-4):387--396, 2020.

\bibitem[Rit22]{MR1501189}
J.~F. Ritt.
\newblock Prime and composite polynomials.
\newblock {\em Trans. Amer. Math. Soc.}, 23(1):51--66, 1922.

\bibitem[Rod25]{rodriguez2025circlehomeomorphismcompositionweldings}
Alex Rodriguez.
\newblock Every circle homeomorphism is the composition of two weldings, 2025.

\bibitem[Rod26]{rodriguez2026flexiblecurveshausdorffdimension}
Alex Rodriguez.
\newblock Flexible curves and hausdorff dimension, 2026.

\bibitem[RS97]{MR1485305}
T.~Recio and J.~R. Sendra.
\newblock A really elementary proof of real {L}\"{u}roth's theorem.
\newblock volume~10, pages 283--290. 1997.
\newblock Real algebraic and analytic geometry (Segovia, 1995).

\bibitem[SM06]{Sharon-Mumford}
E.~Sharon and D.~Mumford.
\newblock {2D-Shape} analysis using conformal mapping.
\newblock {\em Int. J. Comput. Vis.}, 70:55--75, 2006.

\bibitem[Sol20]{solynin2020fingerprintslemniscatesquadraticdifferentials}
Alexander~Yu. Solynin.
\newblock Fingerprints, lemniscates and quadratic differentials.
\newblock \url{https://arxiv.org/abs/2011.03855}, 2020.

\bibitem[SS85]{MR788850}
Kenneth Stephenson and Carl Sundberg.
\newblock Level curves of inner functions.
\newblock {\em Proc. London Math. Soc. (3)}, 51(1):77--94, 1985.

\bibitem[SS15]{MR3272752}
Eric Schippers and Wolfgang Staubach.
\newblock A symplectic functional analytic proof of the conformal welding
  theorem.
\newblock {\em Proc. Amer. Math. Soc.}, 143(1):265--278, 2015.

\bibitem[Ye15]{MR3276598}
Hexi Ye.
\newblock Rational functions with identical measure of maximal entropy.
\newblock {\em Adv. Math.}, 268:373--395, 2015.

\bibitem[You16]{MR3447662}
Malik Younsi.
\newblock Shapes, fingerprints and rational lemniscates.
\newblock {\em Proc. Amer. Math. Soc.}, 144(3):1087--1093, 2016.

\bibitem[You18]{MR3753187}
Malik Younsi.
\newblock Removability and non-injectivity of conformal welding.
\newblock {\em Ann. Acad. Sci. Fenn. Math.}, 43(1):463--473, 2018.

\end{thebibliography}
\end{document}